\newcommand{\C}[1]{{\cal {#1}}}
\title{
Lebesgue Constants Arising in \\
a Class of Collocation Methods
\thanks{
May 6, 2015, revised September 12, 2015
The authors gratefully acknowledge support by
the Office of Naval Research under grants N00014-11-1-0068 and
N00014-15-1-2048, and by the National Science Foundation under
grants DMS 1522629 and CBET-1404767.
}}
\author{
William W. Hager\thanks{{\tt hager@ufl.edu},
http://people.clas.ufl.edu/hager/,
PO Box 118105,
Department of Mathematics,
University of Florida, Gainesville, FL 32611-8105.
Phone (352) 294-2308. Fax (352) 392-8357.}
\and
Hongyan Hou\thanks{{\tt hongyan388@gmail.com},
        Chemical Engineering,
        Carnegie Mellon University,
        5000 Forbes Avenue, Pittsburgh, PA 15213.}
\and
Anil V. Rao\thanks{{\tt anilvrao@ufl.edu},
        http://www.mae.ufl.edu/rao
        Department of Mechanical and Aerospace Engineering,
        P.O. Box 116250, Gainesville, FL 32611-6250.
        Phone:(352) 392-0961. Fax:(352) 392-7303}
}
\begin{document}
\maketitle

\begin{abstract}
Estimates are obtained for the
Lebesgue constants associated with
the Gauss quadrature points on $(-1, +1)$ augmented by the point $-1$
and with the Radau quadrature points on either $(-1, +1]$ or $[-1, +1)$.
It is shown that the Lebesgue constants are $O(\sqrt{N})$,
where $N$ is the number of quadrature points.
These point sets arise in the estimation of the
residual associated with recently developed orthogonal collocation
schemes for optimal control problems.
For problems with smooth solutions,
the estimates for the Lebesgue constants can imply an exponential decay
of the residual in the collocated problem as a function of the number of
quadrature points.
\end{abstract}

\begin{keywords}
Lebesgue constants, Gauss quadrature, Radau quadrature, 
collocation methods
\end{keywords}

\pagestyle{myheadings} \thispagestyle{plain}
\markboth{W. W. HAGER, H. HOU, AND A. V. RAO}
{LEBESGUE CONSTANTS}

\section{Introduction}\label{introduction}
%
Recently, in \cite{ DarbyHagerRao11,DarbyHagerRao10, FrancolinHagerRao13,
GargHagerRao11b, GargHagerRao11a, GargHagerRao10a, PattersonHagerRao14},
a class of methods was developed for solving optimal control problems
using collocation at either Gauss or Radau quadrature points.
In \cite{HagerHouRao15b} and \cite{HagerHouRao15c} an
exponential convergence rate is established for these schemes.
The analysis is based on a bound for the inverse of a linearized operator
associated with the discretized problem, and an
estimate for the residual one gets when substituting the solution to the
continuous problem into the discretized problem.
This paper focuses on the estimation of the residual.
We show that the residual in the sup-norm is bounded by the sup-norm distance
between the derivative of the solution to the continuous problem and
the derivative of the interpolant of the solution.
By Markov's inequality $\cite{Markov1916}$,
this distance can be bounded in terms of the Lebesgue
constant for the point set and the error in best polynomial approximation.
A classic result of Jackson \cite{jackson} gives an estimate for
the error in best approximation.
The Lebesgue constant that we need to analyze corresponds to the
roots of a Jacobi polynomial on $(-1, +1)$
augmented by either $\tau = +1$ or $\tau = -1$.
The effects of the added endpoints were analyzed by
V\'{e}rtesi in \cite{Vertesi81}.
For either the Gauss quadrature points
on $(-1, +1)$ augmented by $\tau = +1$ or the Radau quadrature points on
$(-1, +1]$ or on $[-1, +1)$, the bound given in \cite[Thm. 2.1]{Vertesi81}
for the Lebesgue constants is $O(\log (N) \sqrt{N})$,
where $N$ is the number of quadrature points.
We sharpen this bound to $O(\sqrt{N})$.

To motivate the relevance of the Lebesgue constant to collocation methods,
let us consider the scalar first-order differential equation
\begin{equation} \label{de}
\dot{x}(\tau)=f\left(x(\tau)\right), \quad \tau \in [-1, +1],
\quad x(-1) = x_0,
\end{equation}
where $f : \mathbb{R}\rightarrow\mathbb{R}$.
%
In a collocation scheme for (\ref{de}),
the solution $x$ to the differential equation
(\ref{de}) is approximated by a polynomial $x$
that is required to satisfy the differential
equation at the collocation points.
Let us consider a scheme based on collocation at the Gauss quadrature
points $-1 < \tau_1 < \tau_2 < \ldots < \tau_N < +1$, the roots of the
Legendre polynomial of degree $N$.
In addition, we introduce the noncollocated point $\tau_0 = -1$.
The discretized problem is to find $x \in \C{P}_{N}$,
the space of polynomials of degree at most $N$, such that
\begin{equation}\label{collocated}
\dot{x}(\tau_k) = f(x(\tau_k)), \quad 1 \le k \le N,
\quad x(-1) = x_0.
\end{equation}
A polynomial of degree at most $N$ is uniquely specified by
$N+1$ parameters such as its coefficients.
The $N$ collocation equations and the boundary condition in (\ref{collocated})
yield $N+1$ equations for the polynomial.

The convergence of a solution of the collocated problem (\ref{collocated})
to a solution of the continuous problem (\ref{de})
ultimately depends on how accurately a polynomial interpolant of a
continuous solution satisfies the discrete equations (\ref{collocated}).
The Lagrange interpolation polynomials for the point set
$\{\tau_0, \tau_1, \ldots , \tau_N\}$ are defined by
\begin{equation}\label{lag}
L_i(\tau)=\prod_{\substack{j=0\\ j\neq i}}^N\frac{\tau-\tau_j}
{\tau_i-\tau_j}, \quad 0 \le i \le N.
\end{equation}
The interpolant $x^N$ of a solution $x$ to (\ref{de}) is given by
\[
x^N (\tau) = \sum_{j=0}^N x (\tau_j) L_j(\tau).
\]
%
The residual in (\ref{collocated})
associated with a solution of (\ref{de}) is the vector with components
\begin{equation}\label{res}
r_0 = x^N(-1) - x_0, \quad r_k = \dot{x}^N(\tau_k) - f(x^N(\tau_k)), \quad
1 \le k \le N.
\end{equation}
For the Gauss scheme,
$r_0 = 0$ since $x$ satisfies the boundary condition in (\ref{de}).
The potentially nonzero components of the residual are $r_k$, $1 \le k \le N$.

As we show in Section~\ref{residual}, the residual can be bounded
in terms of a Lebesgue constant and the error in best approximation for $x$ and
its derivative.
The Lebesgue constant $\Lambda_N$ relative to the point set
$\{\tau_0, \tau_1, \ldots , \tau_N\}$ is defined by
\begin{equation}\label{ln1}
\Lambda_N=\max \left\{
\sum_{j=0}^N\left|L_j(\tau)\right|: \tau\in[-1,1] \right\} .
\end{equation}
%
The article \cite{Brutman97} of Brutman gives a comprehensive survey on the
analysis of Lebesgue constants, while the book \cite{Mastroianni08}
of Mastroianni and Milovanovi\'{c} covers more recent results.

The paper is organized as follows.
In Section~\ref{residual}, we show how the Lebesgue constant enters
into the residual associated with the discretized problem (\ref{collocated}).
Section~\ref{szego} summarizes results of Szeg\H{o} used in the analysis.
Section~\ref{gauss+} analyzes the Lebesgue constant for the
Gauss quadrature points augmented by $\tau = -1$,
while Section~\ref{radau+} analyzes Radau quadrature points.
Finally, Section~\ref{tight} examines the tightness of the estimates
for the Lebesgue constants.

{\bf Notation.}
$\mathcal{P}_N$ denotes the space of polynomials of degree at most $N$
and $\|\cdot\|$ denotes the sup-norm on the interval $[-1, +1]$.
The Jacobi polynomial $P_N^{(\alpha, \beta)}(\tau)$,
$N \ge 1$, is an $N$-th degree polynomial, and for fixed $\alpha > -1$ and
$\beta > -1$, the polynomials are orthogonal on the interval $[-1, +1]$
relative to the weight function $(1-\tau)^\alpha(1+\tau)^\beta$.
$P_N$ stands for the Jacobi polynomial $P_N^{(0,0)}$, or equivalently,
the Legendre polynomial of degree $N$.
\section{Analysis of the residual}
\label{residual}
%
As discussed in the introduction,
a key step in the convergence analysis of collocation schemes
is the estimation of the residual defined in (\ref{res}).
The convergence of a discrete solution to the
solution of the continuous problem ultimately depends on
how quickly the residual approaches 0 as $N$ tends to infinity;
for example, see Theorem~3.1 in \cite{DontchevHager97},
Proposition~5.1 in \cite{Hager99c}, or Theorem~2.1 in \cite{Hager02b}.
Since a solution $x$ of (\ref{de}) satisfies the differential equation
on the interval $[-1, +1]$, it follows that
$\dot{x}(\tau_k) = f(x(\tau_k))$, $1 \le k \le N$.
Hence, the potentially nonzero components of the residual can be expressed
$r_k = \dot{x}^N (\tau_k) - \dot{x}(\tau_k)$, $1 \le k \le N$.
In other words, the size of the residual depends on the difference between
the derivative of the interpolating polynomial at the collocation
points and the derivative of the continuous solution at the collocation points.
Hence, let us consider the general problem of estimating the
difference between the derivative of an interpolating polynomial on the
point set $\tau_0 < \tau_1 < \ldots < \tau_N$ contained in $[-1, +1]$
and the derivative of the original function.
\smallskip
\begin{proposition}\label{L1}
If $x$ is continuously differentiable on $[-1, +1]$, then
\begin{eqnarray}
\left\|\dot{x}-\dot{x}^N\right\|
&\le& \left(1+2N^2\right) 
\inf_{q \in \mathcal{P}_{N}}\left\|\dot{x}-\dot{q}\right\| \nonumber \\
&& \quad + N^2(1+\Lambda_N)
\inf_{p \in \mathcal{P}_{N}}\left\|x-p\right\|
\label{diffy}
\end{eqnarray}
where $x^N \in \C{P}_N$ satisfies $x^N(\tau_k) = x(\tau_k)$,
$0 \le k \le N$, and $\Lambda_N$ is the Lebesgue constant relative to
the point set $\{ \tau_0, \tau_1, \ldots, \tau_N \}$.
\end{proposition}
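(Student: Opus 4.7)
The plan is to decompose $\dot{x} - \dot{x}^N$ by inserting two auxiliary polynomials and then convert sup-norm bounds on polynomial differences into sup-norm bounds on their derivatives via Markov's inequality, $\|\dot{r}\| \le N^2 \|r\|$ for $r \in \mathcal{P}_N$. For arbitrary $p, q \in \mathcal{P}_N$, I would write
\begin{equation*}
\dot{x} - \dot{x}^N = (\dot{x} - \dot{q}) + (\dot{q} - \dot{x}^N),
\end{equation*}
so that the first summand feeds directly into $\inf_q \|\dot{x} - \dot{q}\|$. For the second, observe that $q - x^N \in \mathcal{P}_N$, so Markov gives
\begin{equation*}
\|\dot{q} - \dot{x}^N\| \;\le\; N^2 \|q - x^N\| \;\le\; N^2 \bigl(\|q - x\| + \|x - x^N\|\bigr).
\end{equation*}

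Next I would handle $\|x - x^N\|$ by the standard Lebesgue-constant argument. For any $p \in \mathcal{P}_N$ one has $p^N = p$, since interpolation at $N+1$ nodes is exact on $\mathcal{P}_N$, hence $x - x^N = (x - p) - (x - p)^N$. The definition of $\Lambda_N$ gives $\|(x-p)^N\| \le \Lambda_N \|x - p\|$, and therefore $\|x - x^N\| \le (1 + \Lambda_N)\|x - p\|$. Passing to the infimum over $p$ supplies the coefficient $N^2(1 + \Lambda_N)$ in front of $\inf_p \|x - p\|$ in (\ref{diffy}).

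The main subtlety is bounding $\|q - x\|$, since a control on $\|\dot{q} - \dot{x}\|$ alone says nothing about $\|q - x\|$ (they may differ by an arbitrary constant). The key observation is that $\dot{q}$ is invariant under adding a constant to $q$, so without loss of generality I may replace $q$ by $q + (x(-1) - q(-1))$ and assume $q(-1) = x(-1)$; this normalization preserves $\|\dot{x} - \dot{q}\|$. The fundamental theorem of calculus then yields
\begin{equation*}
|q(\tau) - x(\tau)| \;=\; \Bigl|\int_{-1}^{\tau} (\dot{q}(s) - \dot{x}(s))\,ds\Bigr| \;\le\; 2\,\|\dot{x} - \dot{q}\|,
\end{equation*}
so $N^2\|q - x\| \le 2N^2\|\dot{x} - \dot{q}\|$. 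Adding this to the $\|\dot{x} - \dot{q}\|$ term already present produces the coefficient $1 + 2N^2$, and infimizing over $q$ completes the bound (\ref{diffy}).
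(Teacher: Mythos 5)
Your proof is correct and uses essentially the same ingredients as the paper's: Markov's inequality, the Lebesgue-constant bound on the interpolation operator, and the fundamental theorem of calculus after normalizing $q$ so that $q(-1) = x(-1)$, arriving at exactly the same constants. The only difference is cosmetic bookkeeping --- you insert $q$ first and apply the Lebesgue constant to $x - x^N$, whereas the paper inserts $p$ first and applies it to $p - x^N = (p-x)^N$ --- so the two arguments are interchangeable.
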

\begin{proof}
Given $p \in \mathcal{P}_N$, the triangle inequality gives
\begin{equation}
\left\|\dot{x}-\dot{x}^N\right\|\leq \|\dot{x}-\dot{p}\|+\left\|\dot{p}
-\dot{x}^N\right\|.\label{dify}
\end{equation}
By Markov's inequality $\cite{Markov1916}$, we have
\begin{eqnarray}
\left\|\dot{p}-\dot{x}^N\right\|
&\leq& N^2 \left\|p-x^N\right\|=N^2 \left\|\sum_{i=0}^N(p(\tau_i)-x(\tau_i))
L_i(\tau)\right\|\nonumber \\
&\leq & N^2 \Lambda_N\max_{0\leq i\leq N}|p(\tau_i)-x(\tau_i)|
\le N^2\Lambda_N \|p-x\|. \label{qminusy}
\end{eqnarray}
Let $q \in \C{P}_{N}$ with $q(-1) = x(-1)$.
Again, by the triangle and Markov inequalities, we have
\begin{eqnarray}
\|\dot{x}-\dot{p}\| &\le& \|\dot{x} - \dot{q} \| + \|\dot{q} - \dot{p}\| \le
\|\dot{x} - \dot{q} \| + N^2 \|q - p\| \nonumber \\
&\le&
\|\dot{x} - \dot{q} \| + N^2 (\|q - x\| + \|x - p\|). \label{h71}
\end{eqnarray}
By the fundamental theorem of calculus,
\begin{equation}\notag
\left|q(t)-x(t)\right|=\left|\int_{-1}^{t}
\left(\dot{q}(s)-\dot{x}(s)\right)ds\right|\leq \int_{-1}^{t}
\left|\dot{q}(s)-\dot{x}(s)\right|ds\leq 2\|\dot{q}-\dot{x}\|.
\end{equation}
We combine this with (\ref{h71}) to obtain
\begin{equation}\label{h72}
\|\dot{x}-\dot{p}\| \le (1 + 2N^2) \|\dot{x} - \dot{q} \| + N^2 \|x - p\| .
\end{equation}
To complete the proof, combine (\ref{dify}), (\ref{qminusy}), and (\ref{h72})
and exploit the fact that
\[
\left\{\dot{q}: q(-1) = x(-1), \;\; q \in \C{P}_N \right\} =
\left\{\dot{q}: q \in \C{P}_N \right\}.
\]
\end{proof}

An estimate for the right side of \eqref{diffy} follows from results
on best uniform approximation by polynomials, which
originate from work of Jackson \cite{jackson}.
For example, the following result employs an estimate from Rivlin's
book \cite{Rivlin1969}.
\begin{lemma}\label{L2}
If $x$ has $m$ derivatives on $[-1, +1]$ and $N > m$, then
\begin{equation}\label{jackson}
\inf_{p\in \mathcal{P}_N}\|x-p\|\leq
\left( \frac{12}{m+1} \right) \left( \frac{6e}{N} \right)^m
\|x^{(m)}\|,
\end{equation}
where $x^{(m)}$ denotes the $m$-th derivative of $x$.
\end{lemma}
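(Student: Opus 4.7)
The plan is to obtain this as an explicit quantitative form of Jackson's classical theorem on best polynomial approximation of a smooth function. Since Lemma~\ref{L2} is attributed to an estimate in Rivlin's book, I would follow the standard trigonometric route while carefully tracking the constants.

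The first step is the substitution $\tau=\cos\theta$, which turns the approximation of $x\in C^m[-1,+1]$ by a polynomial of degree at most $N$ in $\tau$ into the approximation of the even, $2\pi$-periodic function $g(\theta)=x(\cos\theta)$ by an even trigonometric polynomial of degree at most $N$; the two best-approximation errors agree. A chain-rule calculation relates $\|g^{(m)}\|_\infty$ to $\|x^{(m)}\|$, up to $m$-dependent combinatorial constants that must be tracked and absorbed into the final bound. I would then produce the trigonometric approximant by convolving $g$ with a Jackson-type kernel $K_N$, i.e.\ a nonnegative trigonometric polynomial of degree at most $N$ with $\int_{-\pi}^{\pi}K_N(t)\,dt=1$ and $m$-th moment
\begin{equation*}
\int_{-\pi}^{\pi}|t|^m K_N(t)\,dt \;\le\; \frac{c\,m!\,C^m}{N^m}.
\end{equation*}
Taylor-expanding $g(\theta-t)$ to order $m-1$ about $t=0$, using the symmetry of $K_N$ to kill the polynomial part, and estimating the remainder via the moment bound, yields $\|g-K_N*g\|_\infty\le (cC^m/N^m)\|g^{(m)}\|_\infty$, which transfers back to the desired bound for $x$.

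The main obstacle is extracting the explicit constants $12/(m+1)$ and $(6e)^m$. The factor $e^m$ does not fall out of a generic kernel; it requires an optimized Jackson-type kernel (typically a power of a Fej\'er-type kernel) together with Stirling's inequality $m!\ge\sqrt{2\pi m}(m/e)^m$, which absorbs the $m!$ in the moment bound into $(e/N)^m$. The prefactor $12/(m+1)$ then emerges from the kernel normalization combined with what remains after Stirling, together with the passage from $g^{(m)}$ back to $g$. Because this bookkeeping is carried out cleanly in Rivlin's monograph, in practice I would simply cite that reference, as the authors do, rather than redo the calculation from scratch.
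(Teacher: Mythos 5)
Your instinct to lean on Rivlin is right, but the proposal misses the one step that actually constitutes the paper's proof, and its account of the constants is not correct. Rivlin's Theorem~1.5 does not state the lemma: it gives
\[
\inf_{p\in\mathcal{P}_N}\|x-p\|\le\left(\frac{6}{m+1}\right)\left(\frac{6e}{N}\right)^m\omega_m\!\left(\frac{1}{N-m}\right),
\]
where $\omega_m$ is the modulus of continuity of $x^{(m)}$. The entire content of the paper's proof is the elementary observation that $\omega_m(\delta)\le 2\|x^{(m)}\|$ for any $\delta$, which converts the $6/(m+1)$ into $12/(m+1)$. You instead attribute the prefactor $12/(m+1)$ to kernel normalization and Stirling bookkeeping; that is not where it comes from, and if you ``simply cite that reference'' you are still left with a bound in terms of $\omega_m$ rather than $\|x^{(m)}\|$, i.e.\ the lemma is not yet proved.

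The from-scratch sketch also has a genuine gap at the point you dismiss as ``$m$-dependent combinatorial constants.'' For $g(\theta)=x(\cos\theta)$, Fa\`a di Bruno's formula expresses $g^{(m)}$ in terms of $x^{(k)}(\cos\theta)$ for \emph{all} $k\le m$, so $\|g^{(m)}\|_\infty$ is not controlled by $\|x^{(m)}\|$ alone (take $x(\tau)=M\tau$ and $m=2$: then $\|x''\|=0$ but $\|g''\|_\infty=M$). Hence the direct convolution-plus-Taylor argument cannot by itself yield a bound involving only $\|x^{(m)}\|$; one must either first subtract a degree-$(m-1)$ Taylor polynomial of $x$ to normalize the lower-order derivatives, or, as Rivlin actually does, run an induction on $m$ (approximate $x'$ in $\mathcal{P}_{N-1}$ and integrate), which is precisely why his statement is phrased with the modulus of continuity. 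Either repair introduces further $m$-dependent factors that you would need to track before you could claim the specific constants $(6e)^m$ and $12/(m+1)$.
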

\begin{proof}
It is shown in \cite[Thm. 1.5]{Rivlin1969} that
\begin{equation}\label{yp}
\inf_{p\in \mathcal{P}_N}\left\|x-p\right\|\leq
\left( \frac{6}{m+1} \right) \left( \frac{6e}{N} \right)^m
\omega_m \left(\frac{1}{N-m}\right),
\end{equation}
where $\omega_m$ is the modulus of continuity of $x^{(m)}$. 
By the definition of the modulus of continuity, we have
\[
\omega_m\left(\frac{1}{N-m}\right)=\sup\left\{\left|x^{(m)}(\tau_1)
-x^{(m)}(\tau_2)\right|: {\tau_1, \tau_2 \in[-1,1], |\tau_1-\tau_2|
\leq 
\frac{1}{N-m}}\right\}.
\]
Since
\[
|x^{(m)}(\tau_1)-x^{(m)}(\tau_2) |\leq 2 
\|x^{(m)}\| ,
\]
(\ref{jackson}) follows from (\ref{yp}).
\end{proof}

If $\Lambda_N = O(N)$ and $m \ge 4$, then
Proposition~\ref{L1} and Lemma~\ref{L2} imply that the components
of the residual approach zero as $N$ tends to infinity.
Moreover, if $x$ is infinitely differentiable and
there exists a constant $c$ such that $\|x^{(m)}\| \le c^m$,
then we take $m = N-1$ in Lemma~\ref{L2} to obtain
\[
\inf_{p\in \mathcal{P}_N}\|x-p\|\leq
\left( \frac{2}{ec} \right) \left( \frac{6ec}{N} \right)^N.
\]
Hence, the convergence is extremely fast due to the $1/N^N$ factor.

\section{Some results of Szeg\H{o}}
\label{szego}
We now summarize several results developed by Szeg\H{o} in \cite{Szego1939}
for Jacobi polynomials that are used in the analysis.
The page and equation numbers that follow refer to the 2003 edition
of Szeg\H{o}'s book published by the American Mathematical Society.
First, at the bottom of page 338, Szeg\H{o} makes the following observation:
\smallskip
\begin{theorem}\label{jacobi}
The Lebesgue constant for the roots of the Jacobi polynomial 
$P_N^{(\alpha, \beta)}(\tau)$ is $O(N^{0.5+\gamma})$ 
if $\gamma := \max(\alpha, \beta) > -1/2$,
while it is $O(\log N)$ if $\gamma \le-1/2$.
\end{theorem}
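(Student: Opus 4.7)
The plan is to analyze the Lebesgue function $\lambda_N(\tau) = \sum_{i=1}^{N}|L_i(\tau)|$ associated with the Lagrange polynomials
\[
L_i(\tau) = \frac{P_N^{(\alpha,\beta)}(\tau)}{(\tau-\tau_i)\,(P_N^{(\alpha,\beta)})'(\tau_i)}, \quad 1 \le i \le N,
\]
built from the zeros $\tau_1,\dots,\tau_N$ of $P_N^{(\alpha,\beta)}$. The strategy is to bound each $|L_i(\tau)|$ pointwise using Szeg\H{o}'s asymptotic estimates for $P_N^{(\alpha,\beta)}$ and its derivative values at the nodes, then sum over $i$ and take the maximum over $\tau$.

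First I would change variables $\tau = \cos\theta$, $\tau_i = \cos\theta_i$, and invoke the standard interior asymptotic
\[
P_N^{(\alpha,\beta)}(\cos\theta) = N^{-1/2}\, k(\theta)\cos(N\theta+\varphi) + O(N^{-3/2}),
\]
where $k(\theta) \asymp (\sin(\theta/2))^{-\alpha-1/2}(\cos(\theta/2))^{-\beta-1/2}$, together with the companion estimates $|(P_N^{(\alpha,\beta)})'(\tau_i)| \asymp N^{3/2}/(k(\theta_i)\sin\theta_i)$ and the uniform node spacing $\theta_{i+1}-\theta_i \asymp 1/N$. These combine to give
\[
|L_i(\tau)| \le \frac{C}{N}\cdot\frac{k(\theta)\sin\theta_i}{k(\theta_i)\,|\theta-\theta_i|}
\]
whenever $\theta,\theta_i$ stay away from the endpoints $0,\pi$. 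Summation of the $|\theta-\theta_i|^{-1}$ terms telescopes into a discrete Riemann sum of harmonic type and contributes $O(\log N)$; the prefactor $k(\theta)/k(\theta_i)$ encodes how the Jacobi weight at the endpoints amplifies this baseline.

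Next I would partition $[0,\pi]$ into a boundary layer of width $O(1/N)$ around each endpoint and an interior complement. Inside the interior, $k(\theta)/k(\theta_i)$ is bounded, so the sum is $O(\log N)$ irrespective of $\alpha,\beta$. In the boundary layer near $\theta = 0$, the extreme node satisfies $\theta_1 \asymp 1/N$ and $k(\theta_1) \asymp N^{\alpha+1/2}$, so the ratio $k(\theta)/k(\theta_i)$ can grow like $N^{\alpha+1/2}$; this dominates precisely when $\alpha > -1/2$. An identical analysis at $\theta = \pi$ handles $\beta$, producing the bound $\Lambda_N = O(N^{1/2+\gamma})$ in the regime $\gamma > -1/2$. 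When $\gamma \le -1/2$ the boundary-layer amplification is absorbed into the interior logarithmic bound and $\Lambda_N = O(\log N)$ prevails.

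The main obstacle will be making the pointwise estimate for $|L_i(\tau)|$ uniform across the transition between Szeg\H{o}'s interior trigonometric expansion and his Bessel-type (Mehler--Heine) expansion valid in the layer $\theta = O(1/N)$. In that layer the cosine asymptotic breaks down, and the values of $P_N^{(\alpha,\beta)}(\tau)$ together with $(P_N^{(\alpha,\beta)})'(\tau_i)$ at the innermost few nodes must be controlled directly via the Bessel-function limit. Careful bookkeeping of the constants through this matching region is what ultimately pins down whether the endpoint contribution is $N^{\gamma+1/2}$ or merely logarithmic, and constitutes the delicate step of the argument.
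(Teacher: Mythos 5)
The first thing to note is that the paper does not prove this statement at all: Theorem~\ref{jacobi} is quoted verbatim from the bottom of page 338 of Szeg\H{o}'s book, and the authors use it as a black box. So the real comparison is between your outline and Szeg\H{o}'s argument on pp.~336--338, and at the level of strategy you have reconstructed it faithfully: write $L_i$ in terms of $P_N^{(\alpha,\beta)}$ and its derivative at the nodes, pass to $\theta=\arccos\tau$, use the interior trigonometric asymptotics away from $\pm1$ to get an $O(\log N)$ contribution from the bulk, and isolate the endpoint layers of width $O(1/N)$ where the weight exponents $\alpha,\beta$ decide between $N^{\gamma+1/2}$ and $\log N$. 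That is the right roadmap and the right dichotomy.

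As a proof, however, the proposal has two concrete problems. First, the stated node-derivative asymptotic $\bigl|(P_N^{(\alpha,\beta)})'(\tau_i)\bigr|\asymp N^{3/2}/(k(\theta_i)\sin\theta_i)$ is wrong: differentiating the interior asymptotic (the $\theta$-derivative brings down a factor $N$, and $d\tau=-\sin\theta\,d\theta$) gives $\bigl|(P_N^{(\alpha,\beta)})'(\tau_i)\bigr|\asymp N^{1/2}\,k(\theta_i)/\sin\theta_i$, i.e.\ Szeg\H{o}'s (8.9.7) estimate $\asymp N^{1/2}\theta_i^{-\alpha-3/2}$ near $\theta=0$ (the version used as Proposition~\ref{pro2} in this paper). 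Your formula disagrees with this by a factor of order $(N\theta_i)^{2\alpha+1}\cdot(N\theta_i)^{\text{(something)}}$ --- check it against Legendre, where the Christoffel numbers force $|P_N'(\tau_i)|\asymp N^{1/2}\theta_i^{-3/2}$; your expression gives $N^{3/2}\theta_i^{-1/2}$. The displayed bound on $|L_i(\tau)|$ that follows happens to be consistent with the \emph{correct} derivative estimate, not with the one you wrote, so the slip may be typographical, but as written the chain of inequalities does not cohere. Second, the two genuinely delicate steps are named but not carried out: (i) the term with $\theta_i$ nearest to $\theta$, where $|\theta-\theta_i|^{-1}$ is useless and one must use that $\tau-\tau_i$ divides $P_N^{(\alpha,\beta)}(\tau)-P_N^{(\alpha,\beta)}(\tau_i)$ (exactly the mean-value device the paper deploys in \eqref{h17} for its Radau analysis); and (ii) the boundary layer $\theta=O(1/N)$, where the cosine asymptotic fails and one must switch to the Mehler--Heine/Bessel regime, i.e.\ $|P_N^{(\alpha,\beta)}(\cos\theta)|=O(N^\alpha)$ there, before summing $\sum_i i^{\alpha-1/2}$ to obtain $N^{\alpha+1/2}$ versus $O(1)$. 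You correctly flag (ii) as ``the delicate step,'' but deferring it means the dichotomy at $\gamma=-1/2$ --- which is the entire content of the theorem --- is asserted rather than proved. Since the result is classical and the paper itself only cites it, the cleanest fix is either to supply the boundary-layer computation explicitly using Szeg\H{o}'s (7.32.5) and (8.9.2), or simply to cite Szeg\H{o} as the paper does.
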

\smallskip

For the Gauss quadrature points, $\alpha = \beta = 0$, $\gamma = 0$,
and $\Lambda_N = O(\sqrt{N})$.
The result that we state as Theorem~\ref{jacobi}
is based on a number of additional properties of Jacobi polynomials
which are useful in our analysis.
The following identity is a direct consequence of the Rodrigues formula
\cite[p. 67]{Szego1939} for $P_N^{(\alpha,\beta)}$.
\smallskip
\begin{proposition}\label{flip}
For any $\alpha$ and $\beta \in \mathbb{R}$, we have
\begin{equation}\label{eq8}
P_N^{(\alpha, \beta)}(\tau)=(-1)^NP_N^{(\beta, \alpha)}(-\tau)
\quad \mbox{for all } \tau \in [-1, +1].
\end{equation}
\end{proposition}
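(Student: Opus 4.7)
The plan is to invoke the Rodrigues formula for Jacobi polynomials cited from \cite[p.~67]{Szego1939}, namely
$$P_N^{(\alpha,\beta)}(\tau) = \frac{(-1)^N}{2^N N!}(1-\tau)^{-\alpha}(1+\tau)^{-\beta}\frac{d^N}{d\tau^N}\Big[(1-\tau)^{N+\alpha}(1+\tau)^{N+\beta}\Big],$$
and then substitute in a way that swaps $\alpha$ and $\beta$. This representation makes sense for arbitrary real $\alpha,\beta$ (the formula only requires differentiating a smooth function of $\tau$ away from $\pm 1$ and then multiplying by prefactors), so there is no issue extending beyond the classical case $\alpha,\beta>-1$.

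First I would write the Rodrigues formula for $P_N^{(\beta,\alpha)}$ evaluated at $-\tau$:
$$P_N^{(\beta,\alpha)}(-\tau) = \frac{(-1)^N}{2^N N!}(1+\tau)^{-\beta}(1-\tau)^{-\alpha}\left.\frac{d^N}{ds^N}\Big[(1-s)^{N+\beta}(1+s)^{N+\alpha}\Big]\right|_{s=-\tau}.$$
Next I would perform the change of variable $s=-\tau$. Since $ds=-d\tau$, each application of $d/ds$ contributes a factor of $-1$, so the $N$-fold derivative produces an overall factor of $(-1)^N$ when rewritten in terms of $d/d\tau$. After substitution, $(1-s)^{N+\beta}(1+s)^{N+\alpha}$ becomes $(1+\tau)^{N+\beta}(1-\tau)^{N+\alpha}$, which is exactly the bracketed expression appearing in the Rodrigues formula for $P_N^{(\alpha,\beta)}(\tau)$ (with the roles of the two factors just written in the opposite order).

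Collecting the pieces, the two $(-1)^N$ factors combine to $+1$, so that
$$P_N^{(\beta,\alpha)}(-\tau) = \frac{1}{2^N N!}(1-\tau)^{-\alpha}(1+\tau)^{-\beta}\frac{d^N}{d\tau^N}\Big[(1-\tau)^{N+\alpha}(1+\tau)^{N+\beta}\Big] = (-1)^N P_N^{(\alpha,\beta)}(\tau).$$
Multiplying both sides by $(-1)^N$ yields the identity \eqref{eq8}. The only real work is bookkeeping of signs, and the one point requiring attention is ensuring that Szeg\H{o}'s convention for the sign in the Rodrigues formula is used consistently so that the two $(-1)^N$ factors (one from the prefactor, one from the chain rule) combine in the right way. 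No analytical difficulty arises; the argument is purely algebraic once the Rodrigues formula is in hand.
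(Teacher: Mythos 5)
Your proof is correct and follows exactly the route the paper indicates: the paper simply remarks that the identity is a direct consequence of the Rodrigues formula on p.~67 of Szeg\H{o}, and your substitution $s=-\tau$ with the two cancelling factors of $(-1)^N$ is precisely the computation being alluded to. The sign bookkeeping checks out (e.g., it is consistent with $P_1^{(\alpha,\beta)}(\tau)=\tfrac12[(\alpha+\beta+2)\tau+\alpha-\beta]$), so nothing further is needed.
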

\smallskip

The following proposition provides some bounds for Jacobi polynomials.
\smallskip
\begin{proposition}\label{pro1}
For any $\alpha$ and $\beta \in \mathbb{R}$
and any fixed constant $c_1 > 0$,
we have
\[
P_N^{(\alpha,\beta)}(\cos\theta)=\left\{
\begin{array}{clcccl}
O\left(N^\alpha\right) &\mbox{if } \theta \in
[&0&,& c_1N^{-1} &],\\[.05in]
\theta^{-\alpha-0.5}O\left(N^{-1/2}\right)
&\mbox{if } \theta \in [ &c_1N^{-1} &, & \pi/2 &],\\[.05in]
(\pi-\theta)^{-\beta-0.5}O\left(N^{-1/2}\right)
&\mbox{if } \theta \in [&\pi/2&,& \pi- c_1N^{-1}&],\\[.05in]
O\left(N^\beta\right) &\mbox{if } \theta \in [&\pi- c_1N^{-1}&,& \pi&].
\end{array}
\right.
\]
\end{proposition}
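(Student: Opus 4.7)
The plan is to recognize that cases (1) and (2) of Proposition~\ref{pro1} (small $\theta$ and the left half of the interior) are classical asymptotic estimates for Jacobi polynomials that appear explicitly in Szeg\H{o}'s book, while cases (3) and (4) (the right half and the neighborhood of $\theta=\pi$) will be obtained from (1) and (2) by the reflection identity \eqref{eq8} in Proposition~\ref{flip}. So there is essentially no new analytical work to do; the task is to assemble the right citations and then use symmetry.

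First, I would quote the two relevant results from Szeg\H{o}. The bound $P_N^{(\alpha,\beta)}(\cos\theta)=O(N^{\alpha})$ for $\theta\in[0,c_1N^{-1}]$ comes from the explicit value of $P_N^{(\alpha,\beta)}(1)=\binom{N+\alpha}{N}=O(N^{\alpha})$ combined with the fact that on this short arc the polynomial stays within a bounded multiple of its value at $\cos 0=1$ (Szeg\H{o}'s Hilb-type/Darboux asymptotic on the transition zone near the endpoint). The bound $P_N^{(\alpha,\beta)}(\cos\theta)=\theta^{-\alpha-1/2}O(N^{-1/2})$ for $\theta\in[c_1N^{-1},\pi/2]$ is Szeg\H{o}'s uniform estimate on the bulk (sometimes written with the factor $(\sin(\theta/2))^{-\alpha-1/2}(\cos(\theta/2))^{-\beta-1/2}$; on $[0,\pi/2]$ the cosine factor is bounded and the sine factor is comparable to $\theta$). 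Both statements are stated with the uniformity we need, provided $c_1>0$ is fixed.

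Next, I would handle cases (3) and (4) by the substitution $\theta'=\pi-\theta$. Then $\cos\theta=-\cos\theta'$, and Proposition~\ref{flip} gives
\begin{equation*}
P_N^{(\alpha,\beta)}(\cos\theta)=P_N^{(\alpha,\beta)}(-\cos\theta')=(-1)^N P_N^{(\beta,\alpha)}(\cos\theta').
\end{equation*}
If $\theta\in[\pi/2,\pi-c_1 N^{-1}]$, then $\theta'\in[c_1N^{-1},\pi/2]$, so applying case (2) with the parameters $(\alpha,\beta)$ swapped yields
\begin{equation*}
|P_N^{(\alpha,\beta)}(\cos\theta)|=(\theta')^{-\beta-1/2}O(N^{-1/2})=(\pi-\theta)^{-\beta-1/2}O(N^{-1/2}),
\end{equation*}
which is case (3). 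Likewise, if $\theta\in[\pi-c_1 N^{-1},\pi]$, then $\theta'\in[0,c_1N^{-1}]$, and case (1) with parameters swapped gives $|P_N^{(\alpha,\beta)}(\cos\theta)|=O(N^{\beta})$, which is case (4).

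The main (and only real) obstacle is the bookkeeping in the bulk estimate: one needs to verify that the $O(N^{-1/2})$ constant in case (2) is uniform in $\theta$ over $[c_1 N^{-1},\pi/2]$ and degenerates in the correct way as $\theta\downarrow c_1 N^{-1}$, so that it meshes with case (1) at the overlap. This is exactly what Szeg\H{o}'s uniform asymptotic on the bulk provides, so once that reference is pinned down, the remaining passage to cases (3) and (4) via \eqref{eq8} is a one-line change of variable.
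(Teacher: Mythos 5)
Your proposal is correct and follows essentially the same route as the paper: the first two cases are quoted directly from Szeg\H{o} (the paper cites (7.32.5), which contains both bounds verbatim for arbitrary real $\alpha,\beta$ and fixed $c_1>0$), and the last two cases are obtained from the first two via the reflection identity \eqref{eq8} with the substitution $\theta\mapsto\pi-\theta$, exactly as you describe.
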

\smallskip
\begin{proof}
The bounds for $\theta \in [0, cN^{-1}]$ and for
$\theta \in [cN^{-1}, \pi/2]$ appear in \cite[(7.32.5)]{Szego1939}.
If $\theta \in \left[\pi/2, \pi\right]$, then
$\pi-\theta \in \left[0, \pi/2 \right]$ and by \eqref{eq8},
\begin{equation}\label{h1}
P_N^{(\alpha, \beta)}(\cos \theta)=P_N^{(\alpha, \beta)}(-\cos(\pi- \theta))
=(-1)^NP_N^{(\beta, \alpha)}(\cos(\pi- \theta)).
\end{equation}
Hence, for $\theta \in [\pi/2, \pi]$,
the first two estimates in the proposition applied to the right
side of (\ref{h1}) yield the last two estimates.
\end{proof}

The next proposition provides an estimate for the derivative of a
Jacobi polynomial at a zero.
\smallskip
\begin{proposition}\label{pro2}
If $\alpha>-1$ and $\beta>-1$, then there exist constants
$\gamma_2 \ge \gamma_1 > 0$, depending only on $\alpha$ and $\beta$, such that
\[
\gamma_1 i^{-\beta - 1.5} N^{\beta + 2} \le
\left|\dot{P}_N^{(\alpha, \beta)}(\tau_i)\right| \le
\gamma_2 i^{-\beta - 1.5} N^{\beta + 2}
\]
whenever $\tau_i \le 0$ where
$\tau_1 < \tau_2 < \ldots < \tau_N$ are the zeros of $P_N^{(\alpha, \beta)}$
(the smallest zero is indexed first).
Moreover, if $\theta_i \in [0, \pi]$ is defined by
$\cos \theta_i = \tau_i$, then there exist constants
$\gamma_4 \ge \gamma_3 > 0$, depending only on $\alpha$ and $\beta$, such that
\begin{equation}\label{h9}
\gamma_3 \sqrt{N} (\pi - \theta_i)^{-\beta - 1.5} \le
\left|\dot{P}_N^{(\alpha, \beta)}(\tau_i)\right| \le
\gamma_4 \sqrt{N} (\pi -\theta_i)^{-\beta - 1.5}
\end{equation}
whenever $\theta_i \in [\pi/2, \pi]$.
\end{proposition}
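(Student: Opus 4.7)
My plan is to split the argument into (i) proving the angle-based estimate (\ref{h9}) and (ii) converting it to the index-based estimate via the known spacing of the zeros. For (i) I would first reduce the problem to two-sided bounds on Jacobi polynomial values by invoking the classical differentiation identity
\[
\dot{P}_N^{(\alpha,\beta)}(\tau) \;=\; \tfrac{1}{2}(N+\alpha+\beta+1)\,P_{N-1}^{(\alpha+1,\beta+1)}(\tau)
\]
from \cite{Szego1939}, which shifts the Jacobi parameters by $1$ and turns the task into controlling $|P_{N-1}^{(\alpha+1,\beta+1)}(\cos\theta_i)|$ from above and below, with a $\Theta(N)$ prefactor.

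The upper half of (\ref{h9}) then follows immediately from Proposition~\ref{pro1} applied with $\alpha\mapsto\alpha+1$, $\beta\mapsto\beta+1$, $N\mapsto N-1$: on $[\pi/2,\pi-c_1/(N-1)]$ the proposition yields
\[
|P_{N-1}^{(\alpha+1,\beta+1)}(\cos\theta_i)| \;=\; O\bigl((\pi-\theta_i)^{-\beta-3/2}N^{-1/2}\bigr),
\]
and on $[\pi-c_1/(N-1),\pi]$ it yields $O(N^{\beta+1})$; since $\pi-\theta_i\asymp 1/N$ in the second regime, the two ranges collapse into $O(\sqrt N(\pi-\theta_i)^{-\beta-3/2})$ once the $\Theta(N)$ prefactor is restored.

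The matching lower bound is the main obstacle, since Proposition~\ref{pro1} supplies only one-sided information. Here I would invoke Szeg\H{o}'s sharper oscillatory asymptotic from \cite{Szego1939},
\[
P_N^{(\alpha,\beta)}(\cos\theta) \;=\; N^{-1/2}k(\theta)\cos(N\theta+\gamma_{\alpha,\beta}) + O\bigl(N^{-3/2}(\sin\theta)^{-1}\bigr),
\]
valid for $\theta\in[c/N,\pi-c/N]$, with $k(\theta)=\pi^{-1/2}(\sin(\theta/2))^{-\alpha-1/2}(\cos(\theta/2))^{-\beta-1/2}$. Differentiating in $\theta$ and evaluating at a zero $\theta_i$ of $P_N^{(\alpha,\beta)}$ annihilates the leading cosine and leaves $-\sin\theta_i\,\dot{P}_N^{(\alpha,\beta)}(\cos\theta_i) = \pm N^{1/2}k(\theta_i)(1+o(1))$. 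For $\theta_i\in[\pi/2,\pi-c_1/N]$ one has $\sin\theta_i\asymp \pi-\theta_i$, $\sin(\theta_i/2)\asymp 1$, and $\cos(\theta_i/2)\asymp \pi-\theta_i$, hence $|\dot{P}_N^{(\alpha,\beta)}(\cos\theta_i)|\asymp \sqrt N(\pi-\theta_i)^{-\beta-3/2}$, producing both sides of (\ref{h9}) in this range. In the remaining boundary band $\pi-\theta_i=\Theta(1/N)$, Szeg\H{o}'s Mehler--Heine/Bessel-function asymptotic plays the analogous role and supplies a matching lower bound of order $N^{\beta+1}$ for $|P_{N-1}^{(\alpha+1,\beta+1)}(\cos\theta_i)|$, i.e., of order $N^{\beta+2}$ for $|\dot{P}_N^{(\alpha,\beta)}|$.

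Once (\ref{h9}) is in hand, the first pair of inequalities follows via Szeg\H{o}'s zero-spacing estimate from \cite{Szego1939}, which asserts $\pi-\theta_i\asymp i/N$ uniformly for $\theta_i\in[\pi/2,\pi]$. Substituting this equivalence into (\ref{h9}) produces $\sqrt N(\pi-\theta_i)^{-\beta-3/2}\asymp i^{-\beta-3/2}N^{\beta+2}$, which is the claimed two-sided bound in terms of $i$.
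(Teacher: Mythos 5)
Your plan is sound and would yield the stated bounds, but it takes a genuinely different route from the paper: the paper's proof is essentially a citation. It invokes Szeg\H{o}'s inequalities (8.9.2) and (8.9.7) verbatim --- which already give exactly the two-sided index-based and angle-based bounds for the zeros of $P_N^{(\beta,\alpha)}$ lying in $[0,1]$ --- and then transfers them to the zeros of $P_N^{(\alpha,\beta)}$ in $[-1,0]$ via the reflection identity \eqref{eq8}, i.e.\ $\dot{P}_N^{(\alpha,\beta)}(\tau)=\pm\dot{P}_N^{(\beta,\alpha)}(-\tau)$ and $\phi_i=\pi-\theta_i$. What you propose is, in effect, a reconstruction of Szeg\H{o}'s own derivation of those two inequalities: the identity $\dot{P}_N^{(\alpha,\beta)}=\tfrac12(N+\alpha+\beta+1)P_{N-1}^{(\alpha+1,\beta+1)}$, the Darboux-type asymptotic for the lower bound in the bulk, Mehler--Heine near the endpoint, and the zero spacing $\pi-\theta_i\asymp i/N$ to pass between the two forms. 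That buys self-containedness at the cost of length; the paper's version buys brevity at the cost of leaning on the reference. One caution on your lower-bound step: you cannot literally ``differentiate in $\theta$'' an asymptotic formula whose remainder is only an $O(\cdot)$ bound, since nothing controls the derivative of the error term. The rigorous version is the one you already set up in your first paragraph: apply the \emph{undifferentiated} Darboux formula to $P_{N-1}^{(\alpha+1,\beta+1)}(\cos\theta_i)$ and note that shifting $\alpha\mapsto\alpha+1$, $N\mapsto N-1$ leaves the frequency $N+(\alpha+\beta+1)/2$ unchanged while shifting the phase $\gamma$ by $-\pi/2$, so the leading term becomes $\sin(N\theta_i+\gamma)$, which is bounded away from zero at a zero $\theta_i$ because there $\cos(N\theta_i+\gamma)=O\bigl((N\sin\theta_i)^{-1}\bigr)$. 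With that repair (and the endpoint band handled by Bessel asymptotics, as you indicate), your argument closes.
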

\smallskip
\begin{proof}
In \cite[(8.9.2)]{Szego1939}, it is shown that there exist
$\gamma_2 \ge \gamma_1 > 0$, depending only on $\alpha$ and $\beta$, such that
\begin{equation}\label{h7}
\gamma_1 i^{-\beta - 1.5} N^{\beta + 2} \le
\left|\dot{P}_N^{(\beta, \alpha)}(\sigma_i)\right| \le
\gamma_2 i^{-\beta - 1.5} N^{\beta + 2}
\end{equation}
whenever $\sigma_i \ge 0$ where
$\sigma_1 > \sigma_2 > \ldots > \sigma_N$ are the zeros of
$P_N^{(\beta, \alpha)}$ (the largest zero is indexed first).
By Proposition~\ref{flip}, $\tau_i$ is a zero of $P_N^{(\alpha,\beta)}$
if and only if $-\tau_i$ is a zero of $P_N^{(\beta,\alpha)}$.
Hence, the zeros of $P_N^{(\beta,\alpha)}$ are
$-\tau_1 > -\tau_{2} > \ldots > -\tau_N$.
Moreover,
\begin{equation}\label{h7.5}
\dot{P}_N^{(\alpha,\beta)}(\tau) = \pm
\dot{P}_N^{(\beta,\alpha)}(-\tau).
\end{equation}
The bound given in the proposition for
$|\dot{P}_N^{(\alpha,\beta)}(\tau_i)|$ with $\tau_i \le 0$ is exactly the
bound (\ref{h7}) for
$|\dot{P}_N^{(\beta,\alpha)}(\sigma_i)|$ with $\sigma_i \ge 0$.

It is shown in \cite[(8.9.7)]{Szego1939}, that there exist constants
$\gamma_4 \ge \gamma_3 > 0$, depending only on $\alpha$ and $\beta$, such that
\begin{equation}\label{h8}
\gamma_3 \sqrt{N} \phi_i^{-\beta - 1.5} \le
\left|\dot{P}_N^{(\beta, \alpha)}(\sigma_i)\right| \le
\gamma_4 \sqrt{N} \phi_i^{-\beta - 1.5}
\end{equation}
whenever $\phi_i \in [0, \pi/2]$ where $\cos \phi_i = \sigma_i$.
Since $\cos \phi_i = \sigma_i = -\tau_i = \cos (\pi - \theta_i)$,
it follows that $\phi_i = \pi - \theta_i$, and
(\ref{h7.5}) and (\ref{h8}) yield (\ref{h9}).
\end{proof}

%
\section{Lebesgue constant for Gauss quadrature points augmented by $-1$}
\label{gauss+}

In this section we estimate the Lebesgue constant for
the Gauss quadrature points augmented by $\tau_0 = -1$.
Due to the symmetry of the Gauss quadrature points, the same
estimate holds when the Gauss quadrature points are augmented by $+1$
instead of $-1$.
The Gauss quadrature points are the zeros of the Jacobi polynomial
$P_N^{(0, 0)}(\tau)$, which is abbreviated as $P_N(\tau)$.
By Theorem~\ref{jacobi}, the Lebesgue constant for the Gauss
quadrature points themselves is $O(\sqrt{N})$.
The effect of adding the point $\tau_0 = -1$ to the Gauss quadrature
points is not immediately clear due to the new factor $(1 + \tau_i)$
in the denominator of the Lagrange polynomials;
this factor can approach 0 since roots of $P_N$
approach $-1$ as $N$ tends to infinity.
Nonetheless, with a careful grouping of terms,
Szeg\H{o}'s bound in Theorem~\ref{jacobi}
for the Gauss quadrature points can be extended to handle the new
point $\tau_0 = -1$.
\smallskip
\begin{theorem}\label{gausstheom}
The Lebesgue constant for the point set consisting of the Gauss
quadrature points $-1 < \tau_1 < \tau_2 < \ldots < \tau_N < +1$
$($the zeros of $P_N)$ augmented with $\tau_0 = -1$ is $O(\sqrt{N})$.
\end{theorem}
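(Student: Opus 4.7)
The plan is to exploit a partial fraction decomposition that isolates, inside each $L_i$ for $i \ge 1$, the standard Lagrange polynomial $\ell_i$ on the Gauss nodes alone. Starting from $L_i(\tau) = \frac{\tau+1}{\tau_i+1}\,\ell_i(\tau)$ with $\ell_i(\tau) = P_N(\tau)/[(\tau-\tau_i)\,\dot{P}_N(\tau_i)]$, the elementary identity $\tau+1 = (\tau-\tau_i)+(\tau_i+1)$ gives
\[
L_i(\tau) \;=\; \ell_i(\tau) \;+\; \frac{P_N(\tau)}{(\tau_i+1)\,\dot{P}_N(\tau_i)}, \qquad 1 \le i \le N.
\]
Since $L_0(\tau) = P_N(\tau)/P_N(-1)$ and $|P_N(-1)|=1$, we have $|L_0(\tau)| \le |P_N(\tau)| \le 1$, so
\[
\sum_{i=0}^N |L_i(\tau)| \;\le\; 1 \;+\; \sum_{i=1}^N |\ell_i(\tau)| \;+\; |P_N(\tau)|\sum_{i=1}^N \frac{1}{(\tau_i+1)\,|\dot{P}_N(\tau_i)|}.
\]

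The middle sum is the Lebesgue function of the unaugmented Gauss nodes, so Theorem~\ref{jacobi} with $\alpha = \beta = 0$ bounds it by $O(\sqrt N)$.  Because $|P_N|\le 1$ on $[-1,1]$, it only remains to prove that
\[
S \;:=\; \sum_{i=1}^N \frac{1}{(\tau_i+1)\,|\dot{P}_N(\tau_i)|}
\]
is $O(\sqrt N)$.  I would split $S$ at $\tau_i = 0$.  For $\tau_i \le 0$ (so $\theta_i \in [\pi/2,\pi]$), Proposition~\ref{pro2} with $\alpha = \beta = 0$ provides a positive constant $c$ with $|\dot{P}_N(\tau_i)| \ge c\,i^{-3/2}N^2$, and matching the two equivalent two-sided estimates in that proposition forces $\pi - \theta_i$ to be of order $i/N$.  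Hence $\tau_i + 1 = 2\sin^2((\pi-\theta_i)/2)$ is at least a constant times $i^2/N^2$, each such term is bounded by a constant times $i^{-1/2}$, and the partial sum is $O(\sqrt N)$.  For $\tau_i > 0$, the Legendre symmetry $\tau_i = -\tau_{N+1-i}$ (together with $|\dot{P}_N(-\tau)| = |\dot{P}_N(\tau)|$) lets me invoke Proposition~\ref{pro2} at the index $j = N+1-i$; combined with $\tau_i + 1 \ge 1$, each term is at most a constant multiple of $j^{3/2}/N^2$, and $\sum_{j=1}^{\lfloor N/2\rfloor} j^{3/2}/N^2 = O(\sqrt N)$.

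The main obstacle I anticipate is that the naive estimate $|L_i(\tau)| \le \frac{|\tau+1|}{\tau_i+1}\,|\ell_i(\tau)|$ introduces a prefactor that blows up as $\tau_i \to -1$, exactly where $|\ell_i(\tau)|$ itself is largest; pulling out a global supremum of that prefactor is what costs V\'ertesi's analysis an extra $\log N$.  The partial-fraction split sidesteps this because the residual term depends on $\tau$ only through the bounded factor $P_N(\tau)$, reducing matters to the $\tau$-free weighted sum $S$ in which the smallness of $\tau_i+1$ near the left endpoint is precisely compensated by the largeness of $|\dot{P}_N(\tau_i)|$ there.
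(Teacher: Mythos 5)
Your argument is correct, and it reaches the theorem by a decomposition that is genuinely cleaner than the one in the paper, even though the analytic core ends up being the same. Where you use the exact partial-fraction identity $L_i(\tau)=\ell_i(\tau)+P_N(\tau)/[(\tau_i+1)\dot{P}_N(\tau_i)]$, the paper instead partitions the indices $1\le i\le N$ into four sets determined by the relative positions of $\tau$ and $\tau_i$: on the three sets where $(\tau+1)/(\tau_i+1)\le 2$ it bounds $|L_i(\tau)|$ by $2|\ell_i(\tau)|$ and invokes Theorem~\ref{jacobi}, and on the remaining set (where $\tau-\tau_i>\tau_i+1$, which forces $\tau_i<0$) it bounds $|L_i(\tau)|$ by $2/[(\tau_i+1)|\dot{P}_N(\tau_i)|]$ --- that is, exactly the terms of your $\tau$-free sum $S$, but only over the negative roots. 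Both routes therefore reduce the problem to the unaugmented Gauss Lebesgue function plus a weighted sum over roots near $-1$ in which $(1+\tau_i)^{-1}=O(N^2/i^2)$ is compensated by $|\dot{P}_N(\tau_i)|\ge c\, i^{-3/2}N^2$, giving $\sum_i i^{-1/2}=O(\sqrt N)$; this compensation is the real content of the theorem, and you have it. What your version buys is an identity in place of a case analysis (no partition, no factor-of-two bookkeeping, and the $\tau$-dependence confined to the bounded factor $|P_N(\tau)|$), at the modest cost of also having to control the terms of $S$ with $\tau_i>0$, which the paper's partition never produces; your symmetry argument $\tau_i=-\tau_{N+1-i}$ together with $\tau_i+1\ge 1$ disposes of those correctly, since $\sum_{j\le N/2} j^{3/2}/N^2=O(\sqrt N)$. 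One further, minor divergence: you obtain the root location $\pi-\theta_i\asymp i/N$ by playing the two two-sided estimates of Proposition~\ref{pro2} against each other, whereas the paper quotes Szeg\H{o}'s root bounds \cite[(6.21.5)]{Szego1939} directly (its inequality \eqref{*}); both derivations are valid and yield the same lower bound $1+\tau_i\ge c\,(i/N)^2$ via $1+\cos\theta_i=2\sin^2((\pi-\theta_i)/2)$.
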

\smallskip

\begin{proof}
Define
\[l(\tau)=(\tau-\tau_1)(\tau-\tau_2)\dots (\tau-\tau_N),
\quad \mbox{and}\quad  L(\tau)=(\tau+1)l(\tau).
\]
The derivative of $L(\tau)$ at $\tau_i$ is
\[
\dot{L}(\tau_i)=l(\tau_i)+(\tau_i+1)\dot{l}(\tau_i)=\left\{
\begin{array}{cl}\displaystyle
l(-1), & i = 0, \\[.1in]
(\tau_i+1)\dot{l}(\tau_i),  &i> 0.
\end{array}
\right.
\]
Hence, the Lagrange polynomials $L_i(\tau)$ associated with the
point set $\{\tau_0 , \tau_1, \ldots, \tau_N\}$ can be expressed as
\begin{equation}\label{Li}
L_i(\tau)=\frac{L(\tau)}{\dot{L}(\tau_i)(\tau-\tau_i)}=\left\{
\begin{array}{cl}
l(\tau)/l(-1), &i=0, \\[.1in]
\displaystyle\frac{L(\tau)}{(\tau_i+1)\dot{l}(\tau_i)(\tau-\tau_i)}, 
& i> 0.
\end{array}
\right.
\end{equation}
Since $P_N$ is a multiple of $l$ (it has the same zeros), it follows that
\[
L_i(\tau)=\left\{
\begin{array}{cl}
P_N(\tau)/P_N(-1), &i=0,\\[.1in]
\displaystyle
\frac{(\tau+1)P_N(\tau)}{(\tau_i+1)\dot{P}_N(\tau_i)(\tau-\tau_i)}, 
&i > 0.
\end{array}
\right.
\]
By \cite[(7.21.1)]{Szego1939},
$|P_N(\tau)| \le 1$ for all $\tau \in [-1, +1]$, and by
\cite[(4.1.4)]{Szego1939}, $|P_N(-1)| = (-1)^N$.
We conclude that $|L_0 (\tau)| \le 1$ for all $\tau \in [-1, +1]$.
Hence, the proof is complete if
\begin{equation}\label{h3}
\max \left\{ \sum_{i=1}^N|L_i(\tau)| : \tau \in[-1, 1] \right\} = O(\sqrt{N}) .
\end{equation}

For any $\tau \in [-1, +1]$, the integers $i \in [1, N]$ are partitioned
into the four disjoint sets
\begin{eqnarray*}
\C{I}_1 &=& \{ i \in [1,N]: \tau_i \ge 0 \}, \\
\C{I}_2 &=& \{ i \in [1,N]: -1 < \tau_i < 0, \; \tau_i > \tau \}, \\
\C{I}_3 &=& \{ i \in [1,N]: -1 < \tau_i < 0, \; \tau_i \le \tau, \;
\tau - \tau_i \le \tau_i + 1 \}, \\
\C{I}_4 &=& \{ i \in [1,N]: -1 < \tau_i < 0, \; \tau_i \le \tau, \;
\tau - \tau_i > \tau_i + 1 \}.
\end{eqnarray*}
Let $\C{I}_{123}$ denote $\C{I}_1 \cup \C{I}_2 \cup \C{I}_3$.
Observe that for any $i \in \C{I}_{123}$ and $\tau \in [-1, +1]$,
$(\tau+1)/(\tau_i + 1) \le 2$.
Consequently, for all $i \in \C{I}_{123}$,
\[
|L_i (\tau)| =
\left| \frac{(\tau+1)P_N(\tau)}{(\tau_i+1)\dot{P}_N(\tau_i)(\tau-\tau_i)}
\right| \le
\frac{2|P_N(\tau)|}{|\dot{P}_N(\tau_i)(\tau-\tau_i)|} .
\]
This bound together with Theorem~\ref{jacobi} imply that
\[
\sum_{i \in \C{I}_{123}} |L_i(\tau)| \le
\sum_{i \in \C{I}_{123}}
\frac{2|P_N(\tau)|}{|\dot{P}_N(\tau_i)(\tau-\tau_i)|}  \le
2 \sum_{i=1}^N
\frac{|P_N(\tau)|}{|\dot{P}_N(\tau_i)(\tau-\tau_i)|} = O(\sqrt{N})
\]
since the terms in the final sum are the Lagrange
polynomials for the Gauss quadrature points.
To complete the proof, we need to analyze the terms in (\ref{h3})
associated with the indices in $\C{I}_4$.
These terms are more difficult to analyze since $\tau_i + 1$
in the denominator of $L_i$ could approach 0 while $\tau +1$ in
the numerator remains bounded away from 0.

For $i \in \C{I}_4$, we have
\[
\tau + 1 = (\tau - \tau_i) + (\tau_i + 1) \le 2 (\tau - \tau_i)
\]
since $\tau - \tau_i > \tau_i + 1$.
Hence,
\[
|L_i (\tau)| \le  \frac{2|P_N(\tau)|}{|(\tau_i + 1) \dot{P}_N (\tau_i)|} \le
\frac{2}{|(\tau_i + 1) \dot{P}_N (\tau_i)|}
\]
since $|P_N(\tau)| \le 1$ for all $\tau \in [-1, +1]$
by \cite[(7.21.1)]{Szego1939}.
It follows that
\begin{equation}\label{h6}
\sum_{i \in \C{I}_{4}} |L_i(\tau)| \le
\sum_{i \in \C{I}_{4}}
\frac{2}{|(\tau_i + 1) \dot{P}_N (\tau_i)|} \le
\sum_{-1 < \tau_i < 0 }
\frac{2}{|(\tau_i + 1) \dot{P}_N (\tau_i)|} .
\end{equation}

Given $\theta \in [\pi/2, \pi]$, define $\phi = \pi - \theta$.
Observe that
\[
\left|\frac{\phi^2}{1+\cos \theta}\right|
=\frac{\phi^2}{2\cos^2(\theta/2)}
=\frac{2(\phi/2)^2}{\sin^2 (\phi/2)}
\leq \max_{x\in [0, \pi/4]}
\frac{2x^2}{\sin^2 x} =\frac{\pi^2}{4}.
\]
Hence, for $\theta \in [\pi/2, \pi]$, we have
\begin{equation}\label{h5}
1 + \cos \theta \ge \left( \frac{4}{\pi^2} \right) \phi^2 =
\frac{4}{\pi^2} (\pi - \theta)^2 .
\end{equation}
By the bounds \cite[(6.21.5)]{Szego1939} for the roots of the
Jacobi polynomial $P_N^{(\alpha, \beta)}$ when
$\alpha$ and $\beta \in [-0.5, +0.5]$, it follows that
\begin{equation}\label{*}
\left(\frac{2i-1}{2N+1}\right) \pi \leq \pi-\theta_i
\leq \left(\frac{2i}{2N+1}\right) \pi, \quad
1 \le i \le N,
\end{equation}
where $\cos \theta_i = \tau_i$.
This implies the lower bound
\begin{equation}\label{h4}
\pi - \theta_i \ge
\left(\frac{2i-1}{2N+1}\right) \pi \ge
\left( \frac{i}{3N} \right)\pi > \frac{i}{N} .
\end{equation}
We combine (\ref{h5}) and (\ref{h4}) to obtain
\begin{equation}\label{eq3}
1+\tau_i \ge \frac{4}{\pi^2}(\pi-\theta_i)^2\geq\frac{4}{\pi^2}
\left(\frac{i}{N}\right)^2.
\end{equation}

By Proposition~\ref{pro2},
\[
|\dot{P}_N(\cos \theta_i )| \ge \gamma_1 i^{-1.5} N^2.
\]
This lower bound for the derivative and the lower bound (\ref{eq3}) for
the root imply that
\[
\frac{1}{(1+\tau_i)|\dot{P}_N(\tau_{i})|} \le
\left( \frac{\pi^2}{4 \gamma_1} \right) i^{-1/2} .
\]
Hence, we obtain the following bound for the $\C{I}_4$ sum in (\ref{h6}):
\[
\sum_{-1<\tau_i<0}\frac{2}{(1+\tau_i)|\dot{P}_N(\tau_{i})|} \le
\left( \frac{\pi^2}{2 \gamma_1} \right)
\sum_{i = 1}^N i^{-1/2} \le
\left( \frac{\pi^2}{2 \gamma_1} \right)
\int_0^N i^{-1/2} di = O(\sqrt{N}) .
\]
This bound inserted in (\ref{h6}) completes the proof.
\end{proof}

\section{Lebesgue constants for the Radau quadrature points}
\label{radau+}

Next, we estimate the Lebesgue constant for the Radau quadrature scheme.
There are two versions of the Radau quadrature points depending on whether
$\tau_1 = -1$ or $\tau_N = +1$.
Since these two schemes have quadrature points that are the
negatives of one another, the Lebesgue constants are the same.
The analysis is carried out for the case $\tau_N = +1$.
In this case, the Radau quadrature points are the $N-1$ roots of
$P_{N-1}^{(1,0)}$ augmented by $\tau_N = 1$.
Szeg\H{o} shows that the Lebesgue constant for the roots of
$P_{N-1}^{(1,0)}$ is $O(N^{3/2})$. 
We show that when the quadrature point $\tau_N = 1$ is included,
the Lebesgue constant drops to $O(\sqrt{N})$. 

The analysis requires an estimate for the location of the zeros of
$P_{N-1}^{(1,0)}$.
Our estimate is based on some relatively recent results on
interlacing properties for the zeros of Jacobi polynomials obtained by
Driver, Jordaan, and Mbuyi in \cite{DriverJordaanMbuyi2008}.
Let $\tau_i'$ and $\tau_i''$, $i\geq 1$, be zeros of
$P_{N-1}$ and $P_{N}$ respectively, arranged in increasing order.
Applying \cite[Thm. 2.2]{DriverJordaanMbuyi2008}, we have
\[
\tau_i'' < \tau_i < \tau_{i}' ,
\]
$i = 1, 2, \ldots, N-1$, where
$-1 < \tau_1 < \tau_2 < \ldots < \tau_{N-1} < +1$ are the zeros of
$P_{N-1}^{(1,0)}$.
Let $\theta_i \in [0, \pi]$ be defined by $\cos \theta_i = \tau_i$.
By the estimate (\ref{*}) for the zeros of $P_N$, it follows that
the zeros of $P_{N-1}^{(1,0)}$ have the property that
\begin{equation}\label{zeros}
\left( \frac{2i-1}{2N-1} \right) \pi < \theta_{N-i} <
\left( \frac{2(i+1)}{2N+1} \right) \pi, \quad
1 \le i \le N-1.
\end{equation}
When $i$ is replaced by $N-i$, these bounds become
\begin{equation}\label{zeros*}
\left( \frac{2i-1}{2N+1} \right) \pi < \pi - \theta_{i} <
\left( \frac{2i}{2N-1} \right) \pi, \quad
1 \le i \le N-1.
\end{equation}
Together, (\ref{zeros}) and (\ref{zeros*}) imply that
\begin{equation}\label{phibounds}
\pi - \theta_{i} > i/N \quad \mbox{and} \quad \theta_{N-i} > i/N,
\quad 1 \le i \le N-1;
\end{equation}
moreover, taking into account both the upper and lower bounds, we have
\begin{eqnarray}
\theta_i - \theta_{i+1} &<& \left( \frac{4(i+N)+2N+1}{4N^2 - 1}\right) \pi
\le \left( \frac{10N - 7}{4N^2 - 1} \right) \pi \nonumber \\
&<& \left( \frac{5(2N - 1)}{4N^2 - 1} \right) \pi < \frac{2.5\pi}{N},
\quad 1 \le i \le N-2.
\label{separation}
\end{eqnarray}
Thus, the interlacing properties for the zeros leads to explicit
bounds for the separation of the zeros; for comparison,
Theorem~8.9.1 in \cite{Szego1939} yields $\theta_i - \theta_{i+1} = O(1)/N$,
while (\ref{separation}) yields an explicit constant $2.5\pi$.
These estimates for the zeros of $P_{N-1}^{(1,0)}$ are used
to derive the following result.
\smallskip
\begin{theorem}\label{radau}
The Lebesgue constant for the Radau quadrature points
\[
-1 < \tau_1 < \tau_2 < \ldots < \tau_N = 1
\]
$($the zeros of $P_{N-1}^{(1,0)}$ augmented by $\tau_N = +1)$ is $O(\sqrt{N})$.
\end{theorem}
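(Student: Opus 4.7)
My approach is to mirror the proof of Theorem \ref{gausstheom}, swapping the roles of the endpoints $-1$ and $+1$ and replacing $P_N$ by $P_{N-1}^{(1,0)}$. Setting $l(\tau) = \prod_{i=1}^{N-1}(\tau - \tau_i)$, which is a nonzero scalar multiple of $P_{N-1}^{(1,0)}$, and $L(\tau) = (\tau - 1) l(\tau)$, one obtains
\[
L_N(\tau) = \frac{P_{N-1}^{(1,0)}(\tau)}{P_{N-1}^{(1,0)}(1)} \quad \mbox{and} \quad L_i(\tau) = \frac{(1-\tau) P_{N-1}^{(1,0)}(\tau)}{(1-\tau_i)\dot{P}_{N-1}^{(1,0)}(\tau_i)(\tau_i - \tau)}, \quad 1 \le i \le N-1.
\]
The endpoint term is handled using the explicit value $P_{N-1}^{(1,0)}(1) = N$ together with the uniform bound $|P_{N-1}^{(1,0)}(\tau)| = O(N)$ on $[-1,1]$, obtained by checking the four regimes of Proposition \ref{pro1} with $\alpha=1$, $\beta=0$; this gives $|L_N(\tau)| = O(1)$.

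For $\sum_{i=1}^{N-1}|L_i(\tau)|$, I would partition the indices as in the Gauss proof, but reflected about the origin:
\begin{eqnarray*}
\C{I}_1 &=& \{ i: \tau_i \le 0 \}, \\
\C{I}_2 &=& \{ i: 0 < \tau_i < 1, \; \tau_i < \tau \}, \\
\C{I}_3 &=& \{ i: 0 < \tau_i < 1, \; \tau_i \ge \tau, \; \tau_i - \tau \le 1 - \tau_i \}, \\
\C{I}_4 &=& \{ i: 0 < \tau_i < 1, \; \tau_i \ge \tau, \; \tau_i - \tau > 1 - \tau_i \}.
\end{eqnarray*}
For $i \in \C{I}_{123} := \C{I}_1 \cup \C{I}_2 \cup \C{I}_3$ one verifies that $(1-\tau)/(1-\tau_i) \le 2$, while for $i \in \C{I}_4$ the defining inequality gives $1 - \tau < 2(\tau_i - \tau)$, so
\[
|L_i(\tau)| \le \frac{2 |P_{N-1}^{(1,0)}(\tau)|}{(1-\tau_i)|\dot{P}_{N-1}^{(1,0)}(\tau_i)|}, \quad i \in \C{I}_4.
\]
Since Proposition \ref{pro2} bounds $|\dot{P}_N^{(\alpha,\beta)}|$ only at zeros in $[-1,0]$, to estimate this derivative for $\tau_i$ near $+1$ I pass through Proposition \ref{flip}: $\tau_i$ is a zero of $P_{N-1}^{(1,0)}$ if and only if $-\tau_i$ is a zero of $P_{N-1}^{(0,1)}$ with angle $\pi - \theta_i$, so (\ref{h9}) applied with $\alpha=0$, $\beta=1$ yields $|\dot{P}_{N-1}^{(1,0)}(\tau_i)|$ of order $\sqrt{N}\,\theta_i^{-5/2}$ for $\theta_i \le \pi/2$. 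Combining this with $1-\tau_i \ge (2/\pi^2)\theta_i^2$ and the interlacing bound $\theta_{N-k} > k/N$ from (\ref{phibounds}) gives the $\C{I}_4$ sum an $O(\sqrt{N})$ bound, in exact parallel with (\ref{h6})--(\ref{eq3}) in the Gauss proof.

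The main obstacle is the $\C{I}_{123}$ sum. A blunt mirror of the Gauss argument would bound it by $2\Lambda_{N-1}^{(1,0)}$, the Lebesgue constant for the zeros of $P_{N-1}^{(1,0)}$ alone; but Theorem \ref{jacobi} only gives $O(N^{3/2})$ for that quantity, which is too coarse. To recover the sharp $O(\sqrt{N})$ bound one must instead exploit the cancellation $(1-\tau)|P_{N-1}^{(1,0)}(\tau)| = O(1)$ uniformly on $[-1,1]$ — a further consequence of Proposition \ref{pro1} after checking each of its four regimes — so that
\[
|L_i(\tau)| \le \frac{C}{(1-\tau_i)|\dot{P}_{N-1}^{(1,0)}(\tau_i)||\tau_i - \tau|}.
\]
Applying the symmetry-adapted derivative bound from Proposition \ref{pro2} together with the explicit zero-spacing estimate (\ref{separation}), the $\C{I}_{123}$ sum can be controlled as a discrete Riemann-sum analog and shown to be $O(\sqrt{N})$. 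Assembling the $\C{I}_{123}$ and $\C{I}_4$ contributions with the $|L_N(\tau)| = O(1)$ bound from the first step completes the proof. The crux is the replacement of the crude $O(N^{3/2})$ global Lebesgue-constant estimate by a pointwise analysis that pulls out the $(1-\tau)$ factor.
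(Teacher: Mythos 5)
Your handling of the endpoint term $R_N$ and your diagnosis of the central difficulty (that the Lebesgue constant for the zeros of $P_{N-1}^{(1,0)}$ alone is only $O(N^{3/2})$, so the factor $1-\tau$ must be exploited) are both correct, but at the two places where the $O(\sqrt{N})$ bound actually has to be produced your argument is asserted rather than proved, and the claimed ``exact parallel'' with the Gauss case breaks down. For $\C{I}_4$ you trade the factor $1-\tau$ against $\tau_i-\tau$ and are left with $2|P_{N-1}^{(1,0)}(\tau)|/\bigl((1-\tau_i)|\dot{P}_{N-1}^{(1,0)}(\tau_i)|\bigr)$. In the Gauss proof the analogous numerator satisfies $|P_N(\tau)|\le 1$, which is precisely what makes each term $O(i^{-1/2})$ and permits extending the sum to all negative roots as in (\ref{h6}). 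Here $|P_{N-1}^{(1,0)}(\tau)|$ can be as large as $N$ when $\tau$ is near $+1$, and the same extension of the sum over all positive roots yields $O(N^{3/2})$, not $O(\sqrt{N})$. To rescue the step you must use the membership condition $\tau_i-\tau>1-\tau_i$ itself: it forces $1-\tau_i<(1-\tau)/2$, hence $\theta_i<\theta$, so by (\ref{phibounds}) only $O(N\theta)$ indices participate, and combining this count with the decay $\theta^{3/2}|P_{N-1}^{(1,0)}(\cos\theta)|=O(N^{-1/2})$ gives a bounded sum. None of this appears in your sketch.

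The gap in the $\C{I}_{123}$ sum is more serious. The bound $|L_i(\tau)|\le C/\bigl((1-\tau_i)|\dot{P}_{N-1}^{(1,0)}(\tau_i)||\tau_i-\tau|\bigr)$ diverges as $\tau\to\tau_i$ while $|L_i(\tau)|\to 1$, so it cannot be ``controlled as a discrete Riemann-sum analog'': for the indices with $\tau_i$ near $\tau$ you must cancel the zero of $P_{N-1}^{(1,0)}$ at $\tau_i$ against the factor $\tau-\tau_i$, which is exactly the mean-value-theorem and derivative-formula step (\ref{h17}) of the paper, and even the remaining indices require the estimate (\ref{h70}) relating $|\cos\theta-\cos\theta_i|$ to $|\theta^2-\theta_i^2|$, the case split between $\theta\le c/N$ and $\theta\ge c/N$, and the counting argument that turns the near-singular sum into $O(\log N)$. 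That is the bulk of the theorem, and your sentence claiming the sum ``can be shown to be $O(\sqrt{N})$'' restates the conclusion without supplying the argument; indeed, for $\tau$ near $+1$ the correct order of this portion is $O(\log N)$, which one only discovers by carrying out the computation. For comparison, the paper organizes the proof by the location of $\tau$ --- using Szeg\H{o}'s bounds (\ref{radaulebesgue}) to dispatch $\tau\in[-1,1-\delta]$ cheaply and reserving the delicate analysis for $\tau\in(1-\delta,1]$ --- rather than by your index partition; either organization could in principle succeed, but the hard estimates are the same in both, and your proposal does not contain them.
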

\smallskip
\begin{proof}
The Lagrange interpolating polynomials $R_i$, $1 \le i \le N$,
associated with the Radau quadrature points are given by
\[
R_i(\tau)= \left( \frac{1-\tau}{1-\tau_i} \right)
\prod_{\substack{j=1\\ j\neq i}}^{N-1}\frac{\tau-\tau_j}
{\tau_i-\tau_j}, \quad 1 \le i \le N-1, \quad
R_N(\tau) =
\prod_{\substack{j=1}}^{N-1}\frac{\tau-\tau_j}
{1-\tau_j}.
\]
Similar to (\ref{Li}), the $R_i$ can be expressed
\begin{equation}\label{Ri}
R_i(\tau)=\left\{
\begin{array}{cl}
\displaystyle\frac{(1-\tau)P_{N-1}^{(1,0)}(\tau)}
{(1-\tau_i)\dot{P}_{N-1}^{(1,0)}(\tau_i)(\tau-\tau_i)}, 
&i < N, \\[.20in]
\displaystyle{\frac{P_{N-1}^{(1,0)}(\tau)}{P_{N-1}^{(1,0)}(1)}}.
&i=N.\\
\end{array}
\right.
\end{equation}

By \cite[(4.1.1)]{Szego1939} and \cite[(7.32.2)]{Szego1939}, we have
\begin{equation}\label{h22}
P_{N-1}^{(1,0)}(1)= N \quad \mbox{and} \quad
|P_{N-1}^{(1,0)}(\tau)|\le N \mbox{ for all } \tau \in [-1, +1] .
\end{equation}
We conclude that $|R_N (\tau)| \le 1$ for all $\tau \in [-1, +1]$.
Hence, the proof is complete if
\begin{equation}\label{h10}
\max \left\{ \sum_{i=1}^{N-1}|R_i(\tau)| : \tau \in [-1, +1] \right\}
=O(\sqrt{N}) .
\end{equation}

Let $\delta > 0$ be a small constant.
Technically, any $\delta$ satisfying $0 < \delta < 1/2$ is
small enough for the analysis.
Szeg\H{o} establishes the following bounds when analyzing the
Lebesgue constants associated with the roots of Jacobi polynomials:
\begin{equation}\label{radaulebesgue}
\sum_{i = 1}^N \left| \frac{P_{N}^{(1,0)}(\tau)}
{\dot{P}_{N}^{(1,0)}(\tau_i)(\tau-\tau_i)} \right| =
\left\{
\begin{array}{ll}
O(\sqrt{N}) & \mbox{if } \tau \in [-1, \delta-1], \\
O(\log N) & \mbox{if } \tau \in [\delta-1 , 1 - \delta], \\
O(N^{3/2}) & \mbox{if } \tau \in [1 - \delta, 1].
\end{array} \right.
\end{equation}
Szeg\H{o} considers the general Jacobi polynomials
$P_N^{(\alpha, \beta)}$ on pages 336--338 of \cite{Szego1939},
while here we only state the results
corresponding to $\alpha = 1$ and $\beta = 0$.

We first show that (\ref{h10}) holds when $\tau \in [-1, 1-\delta]$.
Observe that $(1 - \tau)/(1-\tau_i) \le 4/\delta$
when $\tau_i \le 1 - \delta/2$ and $\tau \in [-1, +1]$.
It follows from (\ref{radaulebesgue}) that
\begin{eqnarray}
\sum_{\tau_i \le 1-\delta/2} |R_i(\tau)| &\le& \left( \frac{4}{\delta} \right)
\sum_{\tau_i \le 1-\delta/2}
\left| \frac{P_{N-1}^{(1,0)}(\tau)}
{\dot{P}_{N-1}^{(1,0)}(\tau_i)(\tau-\tau_i)} \right| \nonumber \\
&=& \left\{ \begin{array}{ll}
O(\sqrt{N}), & \tau \in [-1, \delta-1], \\
O(\log N), & \tau \in [\delta-1, 1 - \delta] .
\end{array} \right. \label{h11}
\end{eqnarray}

When $\tau_i > 1-\delta/2$ and $\tau \in [-1, +1-\delta]$, we have
$|\tau - \tau_i| \ge \delta/2$; hence,
\begin{equation}\label{h12}
\sum_{1 > \tau_i > 1-\delta/2} |R_i(\tau)| \le \left( \frac{4}{\delta} \right)
\sum_{1 > \tau_i > 1-\delta/2}
\left| \frac{P_{N-1}^{(1,0)}(\tau)}
{(\tau_i - 1)\dot{P}_{N-1}^{(1,0)}(\tau_i)} \right| .
\end{equation}
We have the following bounds for the factors on the right side of (\ref{h12}):
\begin{itemize}
\item[(a)]
By Proposition~\ref{pro1},
$|P_{N-1}^{(1,0)} (\tau)| = O(1)$ if $\tau \in [-1, \delta - 1]$ and
$|P_{N-1}^{(1,0)} (\tau)| = O(N^{-1/2})$ if $\tau \in [\delta - 1, 1-\delta]$.
\item[(b)]
By (\ref{h8}),
$|\dot{P}_{N-1}^{(1, 0)}(\tau_i)| \ge
\gamma_3 \theta_i^{-5/2} \sqrt{N-1}$, where $\cos \theta_i = \tau_i \ge 0$.
\item[(c)]
By a Taylor expansion around $\theta = 0$,
\begin{equation}\label{1-cos}
\theta^2/4 \le 1 - \cos \theta \le \theta^2/2, \quad \theta \in [0, \pi/2].
\end{equation}
\end{itemize}
By (b) and the lower bound in (c) at $\theta = \theta_i$, we have
\begin{equation}\label{h100}
(1-\tau_i) |\dot{P}_{N-1}^{(1,0)}(\tau_i)| \ge
0.25 \gamma_3 \theta_i^{-1/2} \sqrt{N-1} .
\end{equation}
We combine this with (a) and (\ref{h12}) to obtain
\[
\sum_{1 > \tau_i > 1-\delta/2} |R_i(\tau)| =
\left\{ \begin{array}{lll}
O(N^{-1/2})\displaystyle\sum_{i = 1}^N \sqrt{\theta_i} &= O(\sqrt{N}),
& \tau \in [-1, \delta - 1], \\
O(N^{-1})\displaystyle\sum_{i = 1}^N \sqrt{\theta_i} &= O(1),
& \tau \in [\delta-1, 1-\delta] ,
\end{array} \right.
\]
since $\theta_i \in [0, \pi]$.
This establishes (\ref{h11}) for all $\tau \in [-1, 1-\delta]$.

To complete the proof of (\ref{h10}), we need to consider
$\tau \in (1-\delta, 1]$.
The analysis becomes more complex since
Szeg\H{o}'s estimate (\ref{radaulebesgue}) is $O(N^{3/2})$ in this region,
while we are trying to establish a much smaller bound in (\ref{h10});
in fact, the bound in this region is $O(\log N)$ as we will show.
For the numerator of $R_i (\tau)$ and
$\tau \in (1-\delta, 1]$,
Proposition~\ref{pro1} and (\ref{1-cos}) yield
\begin{eqnarray}
(1-\tau) |P_{N-1}^{(1,0)} (\tau)| &=&
(1-\cos \theta)|P_{N-1}^{(1,0)}(\cos \theta)| =
\left\{ \begin{array}{ll}
\theta^2 O(N), & \theta \in [0, 1/N], \\
\theta^{1/2} O(N^{-1/2}), & \theta \in [1/N, \pi/2],
\end{array} \right. \nonumber \\
&=&
O(N^{-1/2}). \label{h14}
\end{eqnarray}
Given $\tau \in (1-\delta, 1]$,
let us first focus on those $i$ in (\ref{h10}) for which
$|\tau-\tau_i| \ge \delta$.
In this case, $\tau_i \le 1-\delta$ or
$1-\tau_i \ge \delta$, and (\ref{h14}) gives
\begin{eqnarray}
\sum_{|\tau-\tau_i|\ge\delta}|R_i(\tau)| &=&
\sum_{|\tau-\tau_i|\ge\delta}
\left| \frac{(\tau-1)P_{N-1}^{(1,0)}(\tau)}
{(\tau_i-1)\dot{P}_{N-1}^{(1,0)}(\tau_i)(\tau-\tau_i)} \right| \nonumber \\
&\le& \frac{O(N^{-1/2})}{\delta^2} \sum_{|\tau-\tau_i|\ge\delta} 
\frac{1}{|\dot{P}_{N-1}^{(1,0)} (\tau_i)|}. \label{h50}
\end{eqnarray}
The lower bounds (\ref{h9}) and (\ref{h8}) imply that
\begin{equation}\label{h51}
\sum_{|\tau-\tau_i|\ge\delta}|R_i(\tau)| =
O(N^{-1}) \sum_{\tau_i \ge 0} \theta_i^{5/2}
+
O(N^{-1}) \sum_{\tau_i < 0} |\pi - \theta_i|^{3/2} = O(1),
\end{equation}
since the terms in the sums are uniformly bounded and there are at
most $N$ terms.

The next step in the proof of (\ref{h10}) for $\tau \in (1-\delta, 1]$
is to consider those terms corresponding to $|\tau-\tau_i|<\delta$.
Since $\delta$ is small, it follows that both $\tau$ and $\tau_i$ are
near 1, and consequently, $\theta$ and $\theta_i$
are small and nonnegative,
where $\cos \theta = \tau$ and $\cos \theta_i = \tau_i$.
In particular, $0 \le \theta_i \le \pi/2$.
In this case where $\tau_i$ is near $\tau$,
it is important to take into account the fact that
$\tau - \tau_i$ is a divisor of the numerator $P_{N-1}^{(1,0)}(\tau)$.
To begin, we combine the lower bound in (\ref{h8}) and the bounds in
(\ref{1-cos}) to obtain
\begin{equation}\label{h19}
\frac{(1-\tau)}
{(1-\tau_i)|\dot{P}_{N-1}^{(1,0)}(\tau_i)|} \le
\frac{2\theta^2}{\theta_i^2 (\gamma_3 \sqrt{N}) \theta_i^{-5/2}} =
O(N^{-1/2}) \theta^2\sqrt{\theta_i} .
\end{equation}
It follows from (\ref{Ri}) that
\begin{equation}\label{h15}
|R_i(\tau)|= O(N^{-1/2}) \theta^2\sqrt{\theta_i}
\left| \frac{P_{N-1}^{(1,0)}(\tau)}{\tau-\tau_i} \right| .
\end{equation}
%

The mean value theorem and the formula
\cite[(4.21.7)]{Szego1939} for the derivative of 
$P_{N-1}^{(\alpha, \beta)}(\tau)$ in terms of 
$P_{N-2}^{(\alpha+1, \beta+1)}(\tau)$ gives the identity
\begin{equation} \label{h17}
\left|\frac{P_{N-1}^{(1,0)}(\tau)}{\tau-\tau_i}\right|
= \left|\frac{P_{N-1}^{(1,0)}(\tau)-P_{N-1}^{(1,0)}
(\tau_i)}{\tau-\tau_i}\right|
=\left(\frac{N+1}{2}\right)\left|P_{N-2}^{(2,1)}(\cos\eta_i)\right|,
\end{equation}
where $\eta_i$ lies between $\theta$ and $\theta_i$. 
Together, (\ref{h15}) and (\ref{h17}) imply that
\begin{equation}\label{h20}
|R_i(\tau)| = O(N^{1/2}) \theta^2 \sqrt{\theta_i}
\left|P_{N-2}^{(2,1)}(\cos\eta_i)\right|.
\end{equation}

The estimate (\ref{h20}) is useful when $\tau_i$ is near $\tau$.
When $\tau_i$ is not near $\tau$, we proceed as follows.
Use the identity
\[
\cos\alpha-\cos\beta
=-2\sin\displaystyle{\frac{(\alpha+\beta)}{2}}
\sin\displaystyle{\frac{(\alpha-\beta)}{2}},
\]
to deduce that
\begin{equation}\label{h70}
|\tau - \tau_i| = |\cos \theta - \cos \theta_i| \ge
\frac{2}{\pi^2} \left| \theta^2 - \theta_i^2 \right|
\end{equation}
when $|\theta + \theta_i| \le \pi$, which is satisfied since
both $\theta$ and $\theta_i$ are near 0.
Exploiting this inequality in (\ref{h15}) yields
\begin{equation}\label{h21}
|R_i(\tau)|= O(N^{-1/2}) \theta^2\sqrt{\theta_i}
\left| \frac{P_{N-1}^{(1,0)}(\tau)}{ \theta^2 - \theta_i^2} \right| .
\end{equation}

Recall, that we now need to analyze the interval $\tau \in [1-\delta, 1]$
and those $i$ for which $|\tau-\tau_i| < \delta$.
Our analysis works with the variable $\theta \in [0, \pi/2]$,
where $\cos \theta = \tau$.
The interval $\theta \in [0, \pi/2]$ corresponds to
$\tau \in [0,1]$ which covers the target interval $[1-\delta, 1]$ when
$\delta$ is small.
By \cite[(7.32.2)]{Szego1939}, we have
\[
|P_{N-2}^{(2,1)}(\cos\eta_i)| \le N(N-1)/2 .
\]
If $\theta \in [0, c/N]$, where $c$ is a fixed constant independent of $N$,
then it follows from
(\ref{h20}) that $|R_i(\tau)| = O(N^{1/2}) \sqrt{\theta_i}$.
Moreover, if $\theta_i \le 2 \theta \le 2c/N$, then
$|R_i (\tau)| = O(1)$.
By the bounds (\ref{phibounds}),
the number of roots that
satisfy $\theta_{N-i} \le 2c/N$ is at most $2c$, independent of $N$.
On the other hand, if $\theta_i > 2 \theta$, then $\theta< \theta_i/2$ and
\[
\left|\theta_i^2-\theta^2\right|=\theta_i^2-\theta^2\geq 
(3/4) \theta_i^2 .
\]
With this substitution in (\ref{h21}), we have
\[
|R_i(\tau)|= O(N^{-1/2}) \theta^2\theta_i^{-3/2}
\left| {P_{N-1}^{(1,0)}(\tau)} \right| .
\]
By (\ref{h22}), $| P_{N-1}^{(1,0)}(\tau)| \le N$.
Hence, if $\theta \in [0, c/N]$, then by (\ref{phibounds}), we have
\begin{eqnarray*}
\sum_{|\tau-\tau_i| < \delta} |R_i(\tau)| &=&
O(N^{-3/2}) \sum_{|\tau-\tau_i| < \delta} \theta_i^{-3/2} =
O(N^{-3/2}) \sum_{i=1}^{N-1} \theta_i^{-3/2} \\
&=& O(N^{-3/2}) \sum_{i=1}^{N-1} \theta_{N-i}^{-3/2} =
O(1) \sum_{i=1}^{N-1} i^{-3/2} = O(1),
\end{eqnarray*}
for all $\theta \in [0, c/N]$.

Finally, suppose that $\theta \in [c/N, \pi/2]$.
By (\ref{separation}) the separation between adjacent zeros
$\theta_i$ and $\theta_{i+1}$ is at most $2.5\pi/N$.
Hence, if $\theta_i$ is within $k$ zeros of $\theta$, then
$\eta_i \ge \theta - \gamma N^{-1}$, $\gamma = 2.5\pi k$.
Here $k \ge 2$ is an arbitrary fixed integer.
By Proposition~\ref{pro1}, we have
\[
\left|P_{N-2}^{(2,1)}(\cos\eta_i)\right| =
(\theta- \gamma N^{-1})^{-5/2}O(N^{-1/2}) .
\]
Choose $c > 2\gamma$.
If $\theta \in [c/N, \pi/2]$,
then $\theta/2 \ge c/(2N) \ge \gamma/N$.
Hence, $\theta- \gamma /N \ge \theta/2$ and
\[
\left|P_{N-2}^{(2,1)}(\cos\eta_i)\right| =
(\theta/2)^{-5/2}O(N^{-1/2}) =
\theta^{-5/2}O(N^{-1/2}) .
\]
Combine this with (\ref{h20}) to obtain
\[
|R_i(\tau)| = O(1) \sqrt{\theta_i/\theta} .
\]
when $\theta \in [c/N, \pi/2]$ and $\theta_i$ is within $k$
zeros of $\theta$.
If $\theta_i \le \theta$, then $R_i (\tau) = O(1)$.
If $\theta_i > \theta$ and $\theta_i$ is within $k$ zeros of $\theta$,
then $\theta_i - \theta \le \gamma/N$, and
\[
\theta_i/\theta \le (\theta + \gamma/N)/\theta \le 1 + \gamma/c
\]
when $\theta \in [cN, \pi/2]$.
Thus $|R_i (\tau)| = O(1)$ when $\theta \in [cN, \pi/2]$ and
$\theta_i$ is within $k$ zeros of $\theta$.

This analysis of $R_i$ when $\theta_i$ is close to $\theta$ needs to
be complemented with an analysis of $R_i$ when $\theta_i$ is not
close to $\theta$ and $\theta \in [c/N, \pi/2]$.
For $\theta$ in this interval, Proposition~\ref{pro1} yields
$|P_{N-1}^{(1,0)} (\cos \theta)| = \theta^{-3/2}O(N^{-1/2})$.
By (\ref{h21}), we have
\begin{equation}\label{h23}
|R_i (\tau)| = O(N^{-1}) \frac{\sqrt{\theta} \sqrt{\theta_i}}
{|\theta^2 - \theta_i^2|} .
\end{equation}
If $\theta \ge 2\theta_i$, then
$\theta^2 - \theta_i^2 \ge (3/4)\theta^2$ and
\[
|R_i (\tau)| = O(N^{-1}) \theta^{-3/2} \theta_i^{1/2} .
\]
By (\ref{zeros*}), we have
\[
|R_{N-i} (\tau)| = O((N\theta)^{-3/2}) \sqrt{i+1} .
\]
Recall that we are focusing on those $i$ for which $\theta_{N-i} \le \theta/2$.
The lower bound $\theta_{N-i} \ge i/N$ from (\ref{phibounds}) implies that
$i \le N\theta/2$ whenever $\theta_{N-i} \le \theta/2$.
Hence, the set of $i$
satisfying $i \le N\theta$ is a superset of the $i$ that we need to consider,
and we have
\begin{eqnarray*}
\sum_{\theta_i \le \theta/2} |R_i (\tau)| &=&
\sum_{\theta_{N-i} \le \theta/2} |R_{N-i} (\tau)| =
O((N\theta)^{-3/2}) \sum_{i \le N\theta} \sqrt{i+1} \\
&=& O((N\theta)^{-3/2}) (N\theta + 1)^{3/2} = O(1) .
\end{eqnarray*}

On the other hand, if $\theta < 2 \theta_i$, then we have
\[
\frac{\sqrt{\theta} \sqrt{\theta_i}}
{|\theta^2 - \theta_i^2|} =
\frac{\sqrt{\theta} \sqrt{\theta_i}}
{|(\theta - \theta_i)(\theta + \theta_i)|} \le
\frac{\sqrt{\theta} \sqrt{\theta_i}}
{|(\theta - \theta_i)\theta_i|} \le
\frac{\sqrt{2}}
{|\theta - \theta_i|} .
\]
Combine this with (\ref{h23}) to obtain
\[
|R_i (\tau)| = \frac{O(1)}{|N\theta - N\theta_i|} .
\]
Earlier we showed that
$|R_i (\tau)| = O(1)$ for those $i$ where the associated $\theta_i$
is within $k$ zeros of $\theta$.
When $\theta_i$ is more than $k$ zeros away from $\theta$,
we exploit the estimate (\ref{zeros}) for the zeros to deduce that
$|N\theta - N\theta_i|$ behaves like an arithmetic sequence of natural numbers.
Hence, the sum of the $|R_i (\tau)|$ over these natural numbers,
where we avoid the singularity, is bounded by a multiple of $\log N$.
This completes the proof.
\end{proof}

\section{Tightness of estimates}\label{numerical}
\label{tight}
At the bottom of page 110 in \cite{Vertesi81}, V\'{e}rtesi states
some lower bounds for the Lebesgue function.
In the case of the Gauss quadrature points augmented by $\tau_{N+1} = +1$
and the Radau quadrature points with $\tau_N = +1$, the associated
Lebesgue function is of order $\sqrt{N}$ at
$\tau = (\tau_1 + \tau_{2})/2$,
the midpoint between the two smallest quadrature points.
It follows that the $O(\sqrt{N})$ estimates for the Lebesgue constant are tight.
To study the tightness of the estimates,
the Lebesgue constants were evaluated numerically and
fit by curves of the form $a \sqrt{N} + b$, $10 \le N \le 100$
(see Figures~\ref{graphgauss}--\ref{graphradau}).
A fast and accurate method for evaluating the Gauss quadrature points,
which could be extended to the Radau quadrature points,
is given by Hale and Townsend in \cite{HaleTownsend13}.
Figure~\ref{graphgauss}--\ref{graphradau} indicate that
a curve of the form $a \sqrt{N} + b$ is a good fit to the Lebesgue constant.
Another Lebesgue constant which enters into the analysis of
the Radau collocation schemes studied in \cite{HagerHouRao15c} is the
Lebesgue constant for the Radau quadrature points on $(-1, +1]$
augmented by $\tau_0 = -1$.
As given by V\'{e}rtesi in \cite[Thm. 2.1]{Vertesi81}, the
Lebesgue constant is $O(\log n)$.
Trefethen \cite{Trefethen13} points out that the Lebesgue constant
on any point set has the lower bound
\[
\Lambda_N \ge \left( \frac{2}{\pi} \right) \log (N) + 0.52125\ldots ,
\]
due to Erd\H{o}s \cite{Erdos61} and Brutman \cite{Brutman78}.
For comparison, Figure~\ref{graphradau-1} plots this lower bound
along with the computed Lebesgue constant.
When the number of interpolation points range between 10 and 100,
the Lebesgue constant for the Radau quadrature
points augmented by the point $-1$ differs from the smallest possible Lebesgue
constant by between 0.70 and 0.84.
\begin{figure}
\centering
\includegraphics[scale=.4]{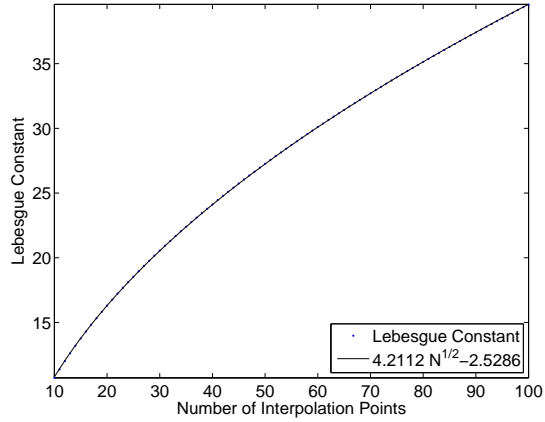}
\caption{Least squares approximation to the Lebesgue constant for 
the point set corresponding to the Gauss quadrature points augmented by $-1$ 
using curves of the form $a\sqrt{N}+b$}
\label{graphgauss}
\end{figure}
\begin{figure}
\centering
\includegraphics[scale=.4]{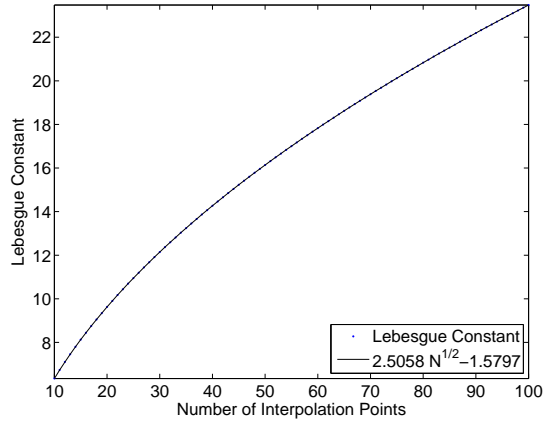}
\caption{Least squares approximation to the Lebesgue constant for 
the point set corresponding to the Radau quadrature points using curves of the 
form $a\sqrt{N}+b$}
\label{graphradau}
\end{figure}
\begin{figure}
\centering
\includegraphics[scale=.4]{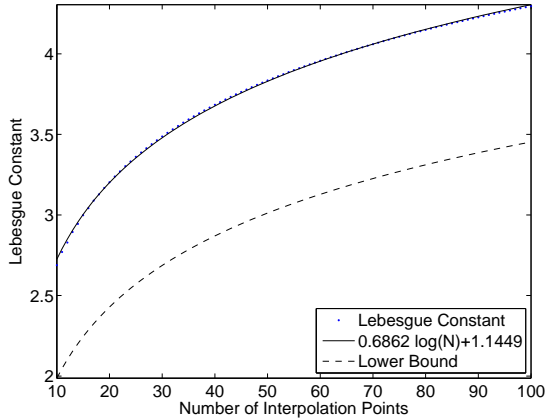}
\caption{Least squares approximation to the Lebesgue constant for 
the point set corresponding to the Radau quadrature points on
$(-1, +1]$ augmented by $-1$ using curves of the form $a\log N +b$}
\label{graphradau-1}
\end{figure}
\section{Conclusions}
\label{conclusions}
In Gauss and Radau collocation methods for unconstrained control problems
\cite{HagerHouRao15b, HagerHouRao15c},
the error in the solution to the discrete problem is bounded by the
residual for the solution to the continuous problem inserted in the
discrete equations.
In Section~\ref{residual}, we observe that the residual in the sup-norm
is bounded by the distance between the derivative of the continuous
solution interpolant and the derivative of the continuous solution.
Proposition~\ref{L1} bounds this distance in terms of the error in
best approximation and the Lebesgue constant for the point set.
We show that the Lebesgue constant for the point sets associated with
the Gauss and Radau collocation methods is $O(\sqrt{N})$, and
by the plots of Section~\ref{numerical}, the Lebesgue constants are
closely fit by curves of the form $a\sqrt{N}+b$.

\section*{Acknowledgments}
Special thanks to Lloyd N. Trefethen for pointing out Brutman's paper
\cite{Brutman97} and for providing a copy when we had trouble locating
the journal.
Also, we thank a reviewer for pointing out the book \cite{Mastroianni08}
which contains newer results as well as additional references.
\bibliographystyle{siam}
\bibliography{library}
\end{document}